\documentclass[11pt]{amsart}

\usepackage[margin=1in]{geometry}
\usepackage{amsmath,amssymb,amsthm}
\usepackage{enumitem}
\usepackage{tikz}
\usepackage[colorlinks=true,linkcolor=blue!55!black,citecolor=blue!55!black,urlcolor=blue!55!black]{hyperref}
\usepackage{microtype}

\newtheorem{theorem}{Theorem}[section]
\newtheorem{proposition}[theorem]{Proposition}
\newtheorem{lemma}[theorem]{Lemma}
\newtheorem{corollary}[theorem]{Corollary}
\theoremstyle{definition}
\newtheorem{example}[theorem]{Example}
\newtheorem{remark}[theorem]{Remark}

\newcommand{\reg}{\operatorname{reg}}
\newcommand{\m}{\mathfrak m}
\newcommand{\ZZ}{\mathbb Z}
\newcommand{\kk}{\mathbb K}
\newcommand{\Ass}{\operatorname{Ass}}
\newcommand{\cA}{\mathcal A}
\newcommand{\cB}{\mathcal B}
\newcommand{\cC}{\mathcal C}
\newcommand{\cL}{\mathcal L}
\newcommand{\cN}{\mathcal N}
\newcommand{\doi}[1]{\url{https://doi.org/#1}}

\title[Symbolic Powers and Supermodular Prime Intersections]{Regularity of Symbolic Powers of Polymatroidal Ideals and Supermodular Prime Intersections}
\author{Chwas Ahmed\textsuperscript{1,2}}
\address{\textsuperscript{1} Department of Mathematics, College of Science, University of Sulaimani, Sulaimani, Kurdistan Region, Iraq.}
\address{\textsuperscript{2} Department of Mathematics and Natural Sciences, American University of Iraq, Sulaimani, Kurdistan Region, Iraq.}
\email{chwas.ahmed@univsul.edu.iq}

\author{Mohammed Rafiq Namiq\textsuperscript{1}}
\thanks{Corresponding author: Mohammed Rafiq Namiq.}
\address{\textsuperscript{1} Department of Mathematics, College of Science, University of Sulaimani, Sulaimani, Kurdistan Region, Iraq.}
\email{mohammed.namiq@univsul.edu.iq}
\date{}

\subjclass[2020]{Primary 13F55, Secondary 13D02, 05E40.}
\keywords{Polymatroidal ideal, symbolic power, Castelnuovo--Mumford regularity, supermodular function, Stanley--Reisner ideal, componentwise linearity.}

\begin{document}
	
	\begin{abstract}
		Let $S=\kk[x_1,\ldots,x_n]$ be a polynomial ring over a field $\kk$. For a polymatroidal ideal $I\subseteq S$ generated in degree $d$, we prove
		\[
		\omega(I^{(s)})=\reg I^{(s)}=\reg I^s=sd\qquad(s\ge1),
		\]
		where symbolic powers use all associated primes and $\omega$ denotes the largest degree of a minimal generator. Thus every symbolic power has a degree resolution. The argument uses a regularity bound for supermodular prime intersections, obtained from Takayama's formula and a homological estimate for noncover complexes. For every nonzero proper squarefree monomial ideal $L$, we also show $\reg L^{(s)}\le s\min\{m,\lambda(L)-\operatorname{ht}L+1\}$, where $m$ counts the minimal primes and $\lambda(L)$ is the support size, and obtain equality for skeleton and pure skeleton ideals. Finally, for every $d\ge3$, we construct a simple graphic matroid whose basis ideal is generated in degree $d$ and has a second symbolic power that is not componentwise linear.
	\end{abstract}
	
	\maketitle
	
	\section{Introduction}
	
	Symbolic powers can behave differently from ordinary powers, even for monomial ideals. Polymatroidal ideals are a useful class for studying this difference because their ordinary powers have a particularly simple homological behavior. Let $\kk$ be a field and let $S=\kk[x_1,\ldots,x_n]$ be the standard graded polynomial ring. Write $\mathcal{G}(J)$ for the minimal monomial generating set of a monomial ideal $J$. A monomial ideal $I\subseteq S$ generated in one degree is \emph{polymatroidal} if its minimal generators satisfy the exchange property. Namely, whenever $x^{\mathbf a},x^{\mathbf b}\in \mathcal{G}(I)$ and $a_i>b_i$ for some $i$, there is an index $j$ with $a_j<b_j$ such that $x_jx^{\mathbf a}/x_i\in \mathcal{G}(I)$. Polymatroidal ideals have linear quotients, and a product of polymatroidal ideals is again polymatroidal \cite{ConcaHerzog,HerzogHibi}. If $I$ is generated in degree $d$, it follows that $I^s$ has an $sd$-linear resolution and $\reg I^s=sd$ for every $s\ge1$. We ask whether the symbolic powers of $I$ have the same regularity.
	
	Throughout, we use symbolic powers defined by all associated primes as follows.
	\begin{equation}\label{eq:1}
		I^{(s)}=\bigcap_{P\in\Ass(S/I)}\bigl(I^sS_P\cap S\bigr),\qquad s\ge1.
	\end{equation}
	This convention is important because a polymatroidal ideal can have embedded associated primes. For example, let $\m=(x_1,x_2,x_3)$ and $I=(x_1^2,x_1x_2,x_1x_3)=x_1\m\subseteq\kk[x_1,x_2,x_3]$. Then $I$ is polymatroidal, and $I=(x_1)\cap\m^2$ is an irredundant primary decomposition. We have $\Ass(S/I)=\{(x_1),\m\}$. The prime $(x_1)$ is minimal and $\m$ is embedded. If one used only the minimal prime, then already for $s=1$ one would have $IS_{(x_1)}\cap S=(x_1)\ne I$, because $x_2$ and $x_3$ are units in $S_{(x_1)}$. So the embedded prime is essential in \eqref{eq:1}. Proposition~\ref{prop:3} describes how all associated primes enter the prime intersection for $I^{(s)}$.
	
	Ficarra and Moradi \cite{FicarraMoradi} conjectured that $\reg I^{(s)}=\reg I^s$ for every polymatroidal ideal $I$ and every $s\ge1$ (Conjecture~A), and that every $I^{(s)}$ is componentwise linear (Conjecture~B). They proved the regularity lower bound when $I$ has no embedded associated primes and verified the two conjectures for several classes. Theorem~\ref{thm:1} proves Conjecture~A without assuming that $I$ has no embedded associated primes.
	
	For a nonzero monomial ideal $J$, write $\omega(J)=\max\{\deg u:u\in \mathcal{G}(J)\}$. Following \cite{Namiq}, we say that $J$ has a \emph{degree resolution} if $\reg J=\omega(J)$.
	
	\begin{theorem}\label{thm:1}
		Let $I\subseteq S$ be a polymatroidal ideal generated in degree $d\ge1$. Then $I^{(s)}$ has a degree resolution for every $s\ge1$. More precisely,
		\[
		\omega(I^{(s)})=\reg I^{(s)}=\reg I^s=sd.
		\]
	\end{theorem}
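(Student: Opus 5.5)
We want to show $\omega(I^{(s)})=\reg I^{(s)}=sd$; we already know $\reg I^s=sd$ from linear quotients. The chain
\[
sd\le\omega(I^{(s)})\le\reg I^{(s)}\le sd
\]
will give everything at once. The middle inequality holds for any monomial ideal. For the first inequality, every minimal generator of $I^{(s)}$ has degree $\ge sd$: indeed $I^{(s)}\subseteq \bigcap_{P}(I^sS_P\cap S)$, and taking $P$ to be the maximal associated prime relative to the support of $I$ yields degree at least $sd$. More simply, a polymatroidal ideal is generated in degree $d$, and $I^s\subseteq I^{(s)}$. To show that $\omega(I^{(s)})\ge sd$, I will exhibit an element of $\mathcal G(I^s)$ that remains minimal in $I^{(s)}$. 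Monomial localization at the prime generated by the variables in the support of a generator shows that some degree-$sd$ element survives. I will also check that $I^{(s)}$ contains no elements of degree $<sd$. For this I use the structure of polymatroids: the prime $\m_F$ on the support $F$ gives $I^sS_{\m_F}\cap S$, which is generated in degrees $\ge sd$ after localizing variables outside $F$. By Proposition~\ref{prop:3}, some associated prime (namely the ideal of all variables in the support, which is associated to a polymatroidal ideal, or at least one whose localization keeps the degree) forces degree $\ge sd$. This part is routine.

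The core is the upper bound $\reg I^{(s)}\le sd$. I will use Proposition~\ref{prop:3} to write $I^{(s)}$ as an intersection of powers of primes, $I^{(s)}=\bigcap_{F}P_F^{\,f_s(F)}$ (or equivalently as the monomial ideal $\{x^{\mathbf a}:\sum_{i\in F}a_i\ge s\,\rho'(F)\ \forall F\}$). Here the exponents come from the polymatroid rank function $\rho$ of $I$, via $f(F)=d-\rho([n]\setminus F)$. This exponent function is supermodular, because $\rho$ is submodular. I then invoke the paper's regularity bound for supermodular prime intersections, derived from Takayama's formula together with the homological estimate for noncover complexes. That bound should read $\reg\bigcap_F P_F^{g(F)}\le g([n])$ for supermodular $g$, or a similar statement. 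Applied to $g=s f$, which is again supermodular, it gives $\reg I^{(s)}\le s f([n])=sd$.

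The main obstacle is matching hypotheses. First, I must verify that the description in Proposition~\ref{prop:3} indeed yields a supermodular exponent function defined on the right lattice, which includes embedded primes. This may require extending $f$ by $0$ or taking a supermodular hull on non-associated subsets without changing the intersection. Second, I must confirm that the top value equals $sd$. I would first try to define $g(F)=s\bigl(d-\rho([n]\setminus F)\bigr)$ on all subsets $F$. I would then check that the intersection over all $F$ equals the intersection over associated primes, since the extra constraints are implied by the ones from $I^s$ localized. Supermodularity then follows directly from the submodularity of $\rho$.
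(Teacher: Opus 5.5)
Your upper bound is the paper's argument. Proposition~\ref{prop:3} gives $I^{(s)}=J_s(\cA(I),\tau)$, and Theorem~\ref{thm:3} gives $\reg I^{(s)}\le s\tau(E)=sd$. No modification of $\tau$ is needed, because Theorem~\ref{thm:3} already allows an arbitrary lower ideal $\cA$ with $\tau$ supermodular on all of $2^E$. Your fallbacks would in fact be wrong: intersecting $P_F^{s\tau(F)}$ over all $F\subseteq E$ gives $I^s$ by \eqref{eq:6}, not $I^{(s)}$, and extending by $0$ off $\cA(I)$ can destroy supermodularity.

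The genuine gap is the lower bound $\omega(I^{(s)})\ge sd$, where your stated claims are false. The prime on the full support need not be associated, and $I^{(s)}$ can have minimal generators of degree $<sd$. Take $I=(x_1x_2,x_1x_3,x_2x_3)=(x_1,x_2)\cap(x_1,x_3)\cap(x_2,x_3)$: then $x_1x_2x_3$ is a minimal generator of $I^{(2)}$ of degree $3<4$. More generally, if $I^{(s)}$ had nothing below degree $sd$, the bound $\reg I^{(s)}\le sd$ would give it a linear resolution, contradicting Proposition~\ref{prop:4}. The inclusion $I^s\subseteq I^{(s)}$ says nothing about minimality, and localizing at a single prime cannot certify it. In the same example, $x_1^2x_2^2$ is a minimal generator of $I^{(2)}$, yet it is not minimal in any of $(x_1,x_2)^2$, $(x_1,x_3)^2$, $(x_2,x_3)^2$. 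In particular it is not minimal in the localization at the prime on its support.

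The missing idea is the tight-constraint scaling argument from Theorem~\ref{thm:3}. Let $x^{\mathbf u}\in\mathcal G(I)$. By the case $s=1$ of Proposition~\ref{prop:3}, $I=\bigcap_{F\in\cA(I)}P_F^{\tau(F)}$. So for each $i$ with $u_i>0$, minimality of $x^{\mathbf u}$ produces $F_i\in\cA(I)$ with $i\in F_i$ and $u(F_i)=\tau(F_i)$. Then $(s\mathbf u-\mathbf e_i)(F_i)=s\tau(F_i)-1<s\tau(F_i)$, so $x^{s\mathbf u}/x_i\notin I^{(s)}$, while $x^{s\mathbf u}\in I^s\subseteq I^{(s)}$. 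Hence $x^{s\mathbf u}\in\mathcal G(I^{(s)})$ has degree $sd$, and $\omega(I^{(s)})\ge sd$. The witnesses $F_i$ typically lie under different associated primes; above, $\{1,3\}$ serves $x_1$ and $\{2,3\}$ serves $x_2$. This is why no single localization works, and why the full intersection over $\cA(I)$, embedded primes included, is needed.
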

	
	Theorem~\ref{thm:1} follows from a more general prime intersection construction. Set $E=[n]$ and $P_F=(x_i:i\in F)$ for $F\subseteq E$. Let $\cA\subseteq2^E$ be a lower ideal, and let $\tau:2^E\to\ZZ_{\ge0}$ be supermodular with $\tau(\emptyset)=0$. For
	\[
	J_s(\cA,\tau)=\bigcap_{F\in\cA}P_F^{s\tau(F)},
	\]
	Theorem~\ref{thm:3} proves
	\[
	s\omega(J_1)\le\omega(J_s)\le\reg J_s\le s\tau(E),
	\]
	and, in addition, $x^{\mathbf u}\in \mathcal{G}(J_1)$ implies $x^{s\mathbf u}\in \mathcal{G}(J_s)$. The scaling property for minimal generators gives the lower bound, while Takayama's formula together with Theorem~\ref{thm:2} gives the regularity bound. If $\omega(J_1)=\tau(E)$, all inequalities are equalities, so every $J_s$ has a degree resolution and $\omega(J_s)=\reg J_s=s\tau(E)$ for all $s\ge1$.
	
	For a polymatroidal ideal, the function $\tau$ is defined in terms of the rank function of the corresponding discrete polymatroid. If $\rho$ is the rank function and $d=\rho(E)$, then $\tau(F)=d-\rho(E\setminus F)$ is supermodular. Proposition~\ref{prop:3} gives the representation $I^{(s)}=J_s(\cA(I),\tau)$, where $\cA(I)$ is determined by all primes in $\Ass(S/I)$. Since $\tau(E)=d$ and $I=J_1$ is generated in degree $d$, Theorem~\ref{thm:1} follows from the equality case of Theorem~\ref{thm:3}. 
	This is why all associated primes in the definition of symbolic powers \eqref{eq:1} are needed in the proof.
	
	The scaling property for minimal generators is related to results of Ficarra and Moradi \cite[Theorem~1.2]{FicarraMoradi} and Dung, Hien, Nguyen, and Trung \cite[Lemma~4.2(ii)]{DungHienNguyenTrung}, but the hypotheses are different. Here the ideals $J_s$ are required to have a common prime intersection representation in which the exponents are given by a single supermodular function. Under this hypothesis, the same intersection representation gives both the scaling property for minimal generators and the regularity bound. Theorem~\ref{thm:3} also applies beyond the class of polymatroidal ideals; see Example~\ref{ex:1}. Its hypotheses are therefore different from, rather than stronger than, the hypotheses in the cited scaling results.
	
	We also consider squarefree monomial ideals. For a nonzero squarefree monomial ideal $L$, set $\operatorname{supp}(L)=\{i\in[n]:x_i\text{ divides some }u\in \mathcal{G}(L)\}$, $\lambda(L)=\deg\operatorname{lcm}(\mathcal{G}(L))$, $m=|\operatorname{Min}(S/L)|$, and $D=\lambda(L)-\operatorname{ht}L+1$. Proposition~\ref{prop:2} proves the bound
	\begin{equation*}
		\omega(L^{(s)})\le sD.
	\end{equation*}
	Corollary~\ref{cor:2} strengthens this to
	\[
	s\omega(L)\le\omega(L^{(s)})\le\reg L^{(s)}\le s\min\{m,D\}.
	\]
	In particular, $\omega(L)\le\min\{m,D\}$. If equality holds, Corollary~\ref{cor:3} implies that every $L^{(s)}$ has a degree resolution and $\omega(L^{(s)})=\reg L^{(s)}=s\omega(L)$ for all $s\ge1$. This applies to every Stanley--Reisner ideal $I_\Gamma$ with $\dim(S/I_\Gamma)=\omega(I_\Gamma)-1$, including the skeleton and pure skeleton ideals in Corollary~\ref{cor:4}. For unmixed squarefree ideals, the $sD$ bound also follows directly from the supermodular construction. See Remark~\ref{rem:3}.
	
	Having a degree resolution does not imply componentwise linearity. Proposition~\ref{prop:4} constructs, for every $d\ge3$, a simple graphic matroid $M_d$ of rank $d$ such that $I_B(M_d)^{(2)}$ is not componentwise linear. Ficarra and Moradi proved that Conjectures~A and~B hold for every polymatroidal ideal generated in degree $2$ \cite[Theorem~5.8(a) and Proposition~6.1]{FicarraMoradi}. Together with Proposition~\ref{prop:4}, this proves the criterion in Corollary~\ref{cor:8}, namely that every symbolic power of every polymatroidal ideal generated in degree $d$ is componentwise linear if and only if $d\le2$. The rank $3$ example is the cycle matroid of $K_4$. Higher rank examples are formed by adjoining coloops. The same example gives a negative answer to the componentwise linearity part of Question~D in \cite{FicarraMoradi}. Related counterexamples for higher symbolic powers of co-chordal edge ideals are given in \cite{AhmedNamiqCoChordal}; they concern a different class of ideals. See also \cite{FicarraMoradiRomer} for componentwise linear symbolic powers of edge ideals.
	
	The rest of the paper is divided into three sections. Section~\ref{sec:1} proves Theorem~\ref{thm:2}. Section~\ref{sec:2} applies it to Takayama degree complexes, proves Theorem~\ref{thm:3}, and derives the squarefree consequences in Proposition~\ref{prop:2} and Corollaries~\ref{cor:2}--\ref{cor:4}. Section~\ref{sec:3} develops the discrete polymatroid representation, proves Proposition~\ref{prop:3} and Theorem~\ref{thm:1}, records further consequences in Corollaries~\ref{cor:5}--\ref{cor:7}, and ends with the componentwise linearity results in Proposition~\ref{prop:4} and Corollary~\ref{cor:8}.
	
	\section{Supermodular functions and noncover complexes}\label{sec:1}
	
	Let $E$ be a finite set. A family $\cA\subseteq2^E$ is a \emph{lower ideal} if $F\in\cA$ and $G\subseteq F$ then $G\in\cA$. Let $\delta:2^E\longrightarrow\ZZ$ be an integer-valued supermodular function with $\delta(\emptyset)=0$, meaning that
	\begin{equation*}
		\delta(A)+\delta(B)\le\delta(A\cap B)+\delta(A\cup B)
		\qquad(A,B\subseteq E).
	\end{equation*}
	Only the values of $\delta$ on $\cA$ are needed below. Define the family of minimal positive sets
	\[
	\cC_\delta=
	\bigl\{F\in\cA:\delta(F)>0\text{ and }\delta(G)\le0
	\text{ for every }G\subsetneq F\bigr\}.
	\]
	The associated \emph{noncover complex} is
	\begin{equation}\label{eq:2}
		\Delta_\delta=\left\langle E\setminus F:F\in\cC_\delta\right\rangle.
	\end{equation}
	The complex $\Delta_\delta$ is the usual noncover complex of the hypergraph with edge set $\cC_\delta$. Noncover complexes of graphs and hypergraphs have been studied, for example, in \cite{ChoiKimPark,KimKim}. If $\cC_\delta=\emptyset$, we take $\Delta_\delta=\emptyset$ to be the void complex. Here the role of the supermodular function $\delta$ is to provide the numerical growth condition needed for the homology bound below.
	
	\begin{example}
		Let $E=\{1,2,3\}$, $\cA=2^E$, and $\delta(F)=|F\cap\{1\}|$. Then $\delta$ is supermodular, and $\cC_\delta=\{\{1\}\}$. Therefore $\Delta_\delta=\langle\{2,3\}\rangle$, the simplex on vertices $2$ and $3$.
	\end{example}
	
	The next theorem is the combinatorial estimate needed later in the regularity argument.
	
	\begin{theorem}\label{thm:2}
		For every $q\ge-1$ and every field $\kk$, if $\widetilde H_q(\Delta_\delta;\kk)\ne0$, then
		\[
		\delta(E)\ge q+2.
		\]
	\end{theorem}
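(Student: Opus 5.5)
We would pass from $\Delta_\delta$ to a nerve and then argue by induction using supermodularity. The facets of $\Delta_\delta$ are the simplices $E\setminus F$, $F\in\cC_\delta$. By the nerve theorem, $\Delta_\delta$ is homotopy equivalent to the nerve $\cN$ on the vertex set $\cC_\delta$. A collection $\{F_1,\dots,F_k\}$ is a face of $\cN$ exactly when $\bigcap(E\setminus F_i)\ne\emptyset$, that is, when $F_1\cup\dots\cup F_k\ne E$. So $\cN$ is the complex of non-covers of $E$ by members of $\cC_\delta$.

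The degenerate cases are handled first. If $\cC_\delta=\emptyset$, then $\Delta_\delta$ is void and has no homology. If $E\in\cC_\delta$, then $\Delta_\delta=\{\emptyset\}$, its only homology is $\widetilde H_{-1}$, and $\delta(E)\ge1=q+2$. Otherwise nonvanishing homology forces $E=\bigcup\cC_\delta$, since otherwise $\cN$ is a full simplex.

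The basic numerical input is a greedy chain estimate. Let $F_1,\dots,F_k\in\cC_\delta$ be such that each $F_{j+1}\not\subseteq U_j:=F_1\cup\dots\cup F_j$. Then $U_j\cap F_{j+1}$ is a proper subset of $F_{j+1}$. It lies in $\cA$ because $\cA$ is a lower ideal, so minimality gives $\delta(U_j\cap F_{j+1})\le0$. Supermodularity then gives
\[
\delta(U_{j+1})\ge\delta(U_j)+\delta(F_{j+1})-\delta(U_j\cap F_{j+1})\ge\delta(U_j)+1 .
\]
Hence $\delta(U_k)\ge k$, and in particular $\delta(E)\ge k$ whenever such a chain covers $E$. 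It therefore suffices to show that $\widetilde H_q(\cN)\ne0$ forces the existence of an irredundant covering chain of length at least $q+2$. This is the usual phenomenon that homology of a non-cover complex in degree $q$ needs a minimal non-face of size at least $q+2$.

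To make this precise, I would prove a stronger, inductive statement. Let $E'\subseteq E$, and let $\cB$ be any family of sets $F$ with $\delta_{E'}(F)>0$, none of whose proper subsets in $\cA$ has positive value. Then nonzero $\widetilde H_q$ of the non-cover complex of $E'$ by $\cB$ forces $\delta_{E'}(E')\ge q+2$. Here $\delta_{E'}$ is the contracted function $\delta_{E'}(A)=\delta(A\cup(E\setminus E'))-\delta(E\setminus E')$. This function is again supermodular, vanishes at $\emptyset$, and satisfies $\delta_{E'}(E')=\delta(E)-\delta(E\setminus E')$.

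The induction is on $|\cB|$, via the Mayer--Vietoris sequence $\cN=\operatorname{del}_{\cN}(F)\cup\operatorname{star}_{\cN}(F)$ for a vertex $F\in\cB$. A nonzero class in $\widetilde H_q(\cN)$ comes either from $\widetilde H_q(\operatorname{del}F)$ or from $\widetilde H_{q-1}(\operatorname{lk}F)$. The deletion is the same kind of complex with the smaller family $\cB\setminus\{F\}$, so induction applies. The link consists of collections $\mathcal G$ whose union does not cover $E'\setminus F$. This is the non-cover complex of $E'\setminus F$ by the traces $G\setminus F$, which are positive for the contracted function with $F$ removed. The inductive hypothesis gives $\delta(E)-\delta(F)\ge q+1$, and since $\delta(F)\ge1$ we get $\delta(E)\ge q+2$.

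I expect the main obstacle to be this link step. The traces $G\setminus F$ need not be minimal positive sets for the contracted function. Also, the inequality $\delta_{F}(G\setminus F)>0$ has to be derived from supermodularity applied to $G$ and $F$, using $\delta(G\cap F)\le0$. For this reason the induction has to be phrased for arbitrary families of \emph{positive} sets that satisfy the weak minimality condition stated above, rather than for $\cC_\delta$ itself, and that hypothesis must be checked to survive contraction. If this bookkeeping becomes too delicate, my fallback is to extract a long irredundant cover directly from a nonzero cycle in $\cN$, by taking a minimal non-face supporting the cycle, and then apply the greedy chain estimate.
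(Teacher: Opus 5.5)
Your reduction is sound: the nerve step, the degenerate cases and the greedy chain estimate (which is the paper's Lemma~\ref{lem:3}) are correct, and a greedy covering chain of length $q+2$ would finish the proof. The gap is that you never prove that $\widetilde H_q(\cN)\ne0$ produces such a chain, and neither of your two routes works as stated.

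\emph{The induction.} The weak minimality hypothesis is not inherited by the link. Take $E=\{1,2,3\}$, $\cA=2^E$ and $\delta=\varepsilon_{\{1,2\}}+\varepsilon_{\{3\}}+\varepsilon_{\{1,3\}}$, where $\varepsilon_C(X)=1$ if $C\subseteq X$ and $0$ otherwise. Then $\cC_\delta=\{\{1,2\},\{3\}\}$. In the link of $F=\{3\}$, the trace $\{1,2\}$ has the proper subset $\{1\}$ with contracted value $\delta(\{1,3\})-\delta(\{3\})=1>0$. So the inductive hypothesis cannot be applied to the link.

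\emph{The fallback.} It rests on a false principle: nonzero $\widetilde H_q$ of a non-cover complex does not force a minimal non-face of size $q+2$. Take $E=\{1,2,3,4\}$ and let $\delta$ be the sum of the indicators of $\{1,2\},\{3,4\},\{1,3\},\{2,4\}$. Then both $\Delta_\delta$ and $\cN$ are $4$-cycles, so $\widetilde H_1\ne0$, yet every minimal cover has only two members. A chain of length $3$ arises only from the non-minimal cover $\{1,2\},\{1,3\},\{2,4\}$ taken in this order. Boundaries of cross-polytopes show the same failure in every degree.

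\emph{The missing idea.} The paper's Lemma~\ref{lem:2} replaces the nerve by the poset of unions. Sending a face $\cB$ of $\cN$ to $\bigcup\cB$ maps into the poset $\cL_\delta$ of proper nonempty unions of members of $\cC_\delta^\circ$. The lower fiber over $U$ has a maximum, namely all members contained in $U$, so the poset fiber theorem gives $\cN\simeq\Delta(\cL_\delta)$. Nonzero $\widetilde H_q$ then yields a chain $U_1\subsetneq\cdots\subsetneq U_{q+1}$ in $\cL_\delta$. Padding with $\emptyset$ and $E$ gives $q+2$ strict inclusions between unions, each raising $\delta$ by at least $1$, so $\delta(E)\ge q+2$.

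\emph{Repairing your induction.} If you prefer to keep it, carry the increment property instead of minimality: $\delta'(U\cup G)\ge\delta'(U)+1$ for every union $U$ of members and every member $G\nsubseteq U$. Deletion preserves this trivially. For the link, set $\delta''(A)=\delta'(A\cup F)-\delta'(F)$. Then $\delta''(U'\cup(G\setminus F))-\delta''(U')=\delta'(U'\cup F\cup G)-\delta'(U'\cup F)$, and $U'\cup F$ is a union of original members, so the property is preserved. The case where the link is $\{\emptyset\}$ still has to be treated directly.
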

	
	We use three facts in the proof. The positive sets define the same noncover complex as $\cC_\delta$. This complex is homotopy equivalent to the order complex of the proper nonempty unions of members of $\cC_\delta$. Finally, supermodularity and integrality imply that $\delta$ increases by at least one along each strict inclusion between such unions.
	
	\begin{lemma}\label{lem:1}
		The noncover complex, defined in \eqref{eq:2}, satisfies
		\[
		\Delta_\delta=
		\left\langle E\setminus F:F\in\cA,\ \delta(F)>0\right\rangle.
		\]
	\end{lemma}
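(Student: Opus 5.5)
We show that the two generating families define the same simplicial complex by proving containment in each direction; the only real work is a finiteness argument.

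For ``$\subseteq$'': every $F\in\cC_\delta$ lies in $\cA$ and has $\delta(F)>0$. So each generator $E\setminus F$ of $\Delta_\delta$ is also one of the generators on the right-hand side.

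For ``$\supseteq$'': let $F\in\cA$ with $\delta(F)>0$. We find $F'\in\cC_\delta$ with $F'\subseteq F$.
\begin{enumerate}[label=(\roman*)]
\item Consider the family $\{G\subseteq F:\delta(G)>0\}$. It is nonempty, since it contains $F$, and finite.
\item Choose a member $F'$ that is minimal under inclusion.
\item Since $\cA$ is a lower ideal and $F\in\cA$, we have $F'\in\cA$. Also $\delta(F')>0$.
\item Any $G\subsetneq F'$ satisfies $G\subseteq F$, so minimality forces $\delta(G)\le0$. Hence $F'\in\cC_\delta$.
\end{enumerate}
Then $E\setminus F\subseteq E\setminus F'$, so $E\setminus F$ is a face of $\Delta_\delta$. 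Thus every generator on the right-hand side lies in $\Delta_\delta$.

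The two complexes therefore coincide. It remains to check the void conventions agree. If no $F\in\cA$ has $\delta(F)>0$, then $\cC_\delta=\emptyset$ by the first inclusion, and both sides are the void complex. Conversely, if some such $F$ exists, the argument above shows $\cC_\delta\neq\emptyset$, so neither side is void.

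Supermodularity is not used here. The lower-ideal hypothesis on $\cA$ is what guarantees that the minimal set $F'$ stays inside $\cA$; this is the only point needing attention.
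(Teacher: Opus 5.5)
Your proof is correct and follows the paper's argument essentially verbatim. The inclusion $\Delta_\delta\subseteq$ right-hand side holds because members of $\cC_\delta$ are positive. For the reverse inclusion you take an inclusion-minimal positive subset $F'\subseteq F$; it lies in $\cA$ by the lower-ideal property, hence $F'\in\cC_\delta$ and $E\setminus F\subseteq E\setminus F'$. Your explicit check that the void conventions match is a harmless addition that the paper leaves implicit.
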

	
	\begin{proof}
		Set $\Sigma=\langle E\setminus F:F\in\cA,\ \delta(F)>0\rangle$. Every member of $\cC_\delta$ is positive, so $\Delta_\delta\subseteq\Sigma$. Conversely, take $F\in\cA$ with $\delta(F)>0$. Since $E$ is finite, $F$ contains a minimal positive subset $G$. Because $\cA$ is a lower ideal, $G\in\cA$. By the minimality of $G$, we have $G\in\cC_\delta$. Since $G\subseteq F$, we have $E\setminus F\subseteq E\setminus G$. It follows that the simplex on $E\setminus F$ is a face of a generating simplex of $\Delta_\delta$. Every generator of $\Sigma$ is therefore a face of $\Delta_\delta$, and $\Sigma=\Delta_\delta$.
	\end{proof}
	
	If $q\ge0$ and $E\in\cC_\delta$, then the corresponding generating simplex is empty. We therefore write $\cC_\delta^\circ=\{F\in\cC_\delta:F\subsetneq E\}$.
	
	\begin{lemma}\label{lem:2}
		Assume $\widetilde H_q(\Delta_\delta;\kk)\ne0$ for some $q\ge0$. Then the members of $\cC_\delta^\circ$ cover $E$. Define
		\[
		\cN_\delta=
		\left\{\cB\subseteq\cC_\delta^\circ:
		\bigcup_{F\in\cB}F\ne E\right\},
		\]
		and let
		\[
		\cL_\delta=
		\left\{U:\emptyset\ne U\subsetneq E,\
		U=\bigcup_{F\in\cB}F
		\text{ for some nonempty }\cB\subseteq\cC_\delta^\circ\right\},
		\]
		ordered by inclusion. Then
		\[
		\Delta_\delta\simeq\cN_\delta\simeq\Delta(\cL_\delta).
		\]
	\end{lemma}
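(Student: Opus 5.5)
We argue in three steps: a covering statement, the nerve theorem for the first equivalence, and the crosscut/nerve correspondence for the second.

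\emph{Step 1: the members of $\cC_\delta^\circ$ cover $E$.} Suppose some vertex $v\in E$ lies in no member of $\cC_\delta^\circ$. There are two cases.
\begin{itemize}
\item If $E\in\cC_\delta$, then no proper subset of $E$ is positive, so $\cC_\delta=\{E\}$. Hence $\Delta_\delta=\{\emptyset\}$, whose reduced homology vanishes in every degree $q\ge0$.
\item Otherwise every generating simplex $E\setminus F$ with $F\in\cC_\delta=\cC_\delta^\circ$ contains $v$. Then $\Delta_\delta$ is a cone with apex $v$ (nonvoid, since $q\ge0$ homology forces $\cC_\delta\neq\emptyset$), hence acyclic.
\end{itemize}
Either case contradicts $\widetilde H_q(\Delta_\delta;\kk)\ne0$. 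We also note that $E\in\cC_\delta$ forces $\cC_\delta^\circ=\emptyset$, so $E\notin\cC_\delta$ and $\Delta_\delta=\langle E\setminus F:F\in\cC_\delta^\circ\rangle$.

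\emph{Step 2: $\Delta_\delta\simeq\cN_\delta$.} We apply the nerve theorem to the cover of $\Delta_\delta$ by its facet-generating simplices $\sigma_F=E\setminus F$, $F\in\cC_\delta^\circ$. Any intersection $\bigcap_{F\in\cB}\sigma_F=E\setminus\bigcup_{F\in\cB}F$ is again a simplex. It is nonempty, and hence contractible, exactly when $\bigcup_{F\in\cB}F\ne E$. So the nerve of this cover is precisely the complex $\cN_\delta$ on vertex set $\cC_\delta^\circ$, and the nerve theorem gives $\Delta_\delta\simeq\cN_\delta$. We regard $\cN_\delta$ as a simplicial complex, which is clearly closed under subsets.

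\emph{Step 3: $\cN_\delta\simeq\Delta(\cL_\delta)$.} This is a crosscut/Quillen fiber argument. Consider the face poset $P$ of $\cN_\delta$ (nonempty faces $\cB$), and the order-preserving map $f\colon P\to\cL_\delta$, $\cB\mapsto\bigcup_{F\in\cB}F$. The map lands in $\cL_\delta$ because each union is nonempty and proper. We check the hypothesis of Quillen's fiber lemma: for $U\in\cL_\delta$, the fiber $f^{-1}(\cL_{\le U})$ consists of the nonempty $\cB$ with every member contained in $U$. This fiber is the face poset of the full simplex on $\{F\in\cC_\delta^\circ:F\subseteq U\}$. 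That simplex is indeed a face of $\cN_\delta$, since its union is $U\ne E$, and its vertex set is nonempty because $U$ is a union of such sets. So every fiber is contractible. Since $\Delta(P)$ is the barycentric subdivision of $\cN_\delta$, we obtain $\cN_\delta\simeq\Delta(P)\simeq\Delta(\cL_\delta)$.

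The main care point is the degenerate situations: $E\in\cC_\delta$, an empty $\cC_\delta^\circ$, and the void versus $\{\emptyset\}$ complexes. These are exactly where Step 1's covering claim and the hypothesis $q\ge0$ are used. In particular, covering guarantees that $\cN_\delta$ omits the full vertex set, so the nerve correctly models $\Delta_\delta$ and $E$ is rightly excluded from $\cL_\delta$.
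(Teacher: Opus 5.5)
Your proof is correct and follows the paper's argument essentially step for step. It uses the cone argument for the covering claim, the nerve theorem for the cover by the simplices $E\setminus F$ with $F\in\cC_\delta^\circ$, and the poset fiber theorem for the union map, where each lower fiber has the maximum element $\{F\in\cC_\delta^\circ:F\subseteq U\}$. Your explicit case $E\in\cC_\delta$ only spells out what the paper covers by observing that nonzero $\widetilde H_q$ with $q\ge0$ forces a nonempty face.
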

	
	\begin{proof}
		Suppose first that some $e\in E$ lies in no member of $\cC_\delta^\circ$. Since $q\ge0$ and $\widetilde H_q(\Delta_\delta;\kk)\ne0$, the complex has a nonempty face. Every nonempty generating simplex has the form $E\setminus F$ with $F\in\cC_\delta^\circ$, and each such simplex contains $e$. It follows that every face lies in a simplex containing $e$, so $\Delta_\delta$ is a cone with apex $e$. A cone has zero reduced homology, a contradiction. We conclude that $\bigcup_{F\in\cC_\delta^\circ}F=E$.
		
		Consider the cover of $|\Delta_\delta|$ by the nonempty simplices $E\setminus F$, $F\in\cC_\delta^\circ$. For a subfamily $\cB\subseteq\cC_\delta^\circ$, the corresponding intersection is $\bigcap_{F\in\cB}(E\setminus F)=E\setminus\bigcup_{F\in\cB}F$. It is nonempty if and only if $\bigcup_{F\in\cB}F\ne E$, which means exactly that $\cB$ is a face of $\cN_\delta$. Every nonempty intersection is itself a simplex and hence contractible. The nerve lemma therefore gives $\Delta_\delta\simeq\cN_\delta$.
		
		Finally, let $\mathcal P(\cN_\delta)$ be the poset of nonempty faces of $\cN_\delta$ and define
		\[
		\phi:\mathcal P(\cN_\delta)\longrightarrow\cL_\delta,
		\qquad
		\phi(\cB)=\bigcup_{F\in\cB}F.
		\]
		Fix $U\in\cL_\delta$ and set $\cC_U=\{F\in\cC_\delta^\circ:F\subseteq U\}$. By the definition of $\cL_\delta$, some subfamily has union $U$, so the union of all members of $\cC_U$ is also $U$. Since $U\subsetneq E$, $\cC_U$ is a face of $\cN_\delta$. Moreover, if $\phi(\cB)\subseteq U$, then every $F\in\cB$ is contained in $U$, so $\cB\subseteq\cC_U$. Thus $\cC_U$ is the maximum element of the lower fiber $\phi^{-1}((\cL_\delta)_{\le U})$. Every such fiber is contractible. The poset fiber theorem \cite{Bjorner} gives
		\[
		\Delta(\mathcal P(\cN_\delta))\simeq\Delta(\cL_\delta).
		\]
		Since $\Delta(\mathcal P(\cN_\delta))$ is the barycentric subdivision of $\cN_\delta$, it follows that $\Delta_\delta\simeq\cN_\delta\simeq\Delta(\cL_\delta)$.
	\end{proof}
	
	\begin{lemma}\label{lem:3}
		If $U\subsetneq V$ are unions of members of $\cC_\delta$, allowing the empty union, then
		\[
		\delta(V)\ge\delta(U)+1.
		\]
	\end{lemma}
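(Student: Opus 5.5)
The plan is to build $V$ from $U$ one member of $\cC_\delta$ at a time. Each step uses the supermodular inequality together with the minimality built into the definition of $\cC_\delta$. Supermodularity is assumed for all pairs of subsets of $E$, so it can be applied to unions even when those unions do not lie in $\cA$.

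The basic estimate is the following. Let $W\subseteq E$ be arbitrary and let $F\in\cC_\delta$. Supermodularity gives
\[
\delta(W\cup F)\ge\delta(W)+\delta(F)-\delta(W\cap F).
\]
If $F\not\subseteq W$, then $W\cap F\subsetneq F$. Because $\cA$ is a lower ideal and $F\in\cA$, we have $W\cap F\in\cA$. The minimality condition defining $\cC_\delta$ then forces $\delta(W\cap F)\le0$; this also covers $W\cap F=\emptyset$, since $\delta(\emptyset)=0$. Also $\delta(F)>0$, so $\delta(F)\ge1$ by integrality. Hence
\[
\delta(W\cup F)\ge\delta(W)+1\qquad\text{whenever }F\not\subseteq W.
\]
If instead $F\subseteq W$, then $W\cup F=W$ and the value of $\delta$ does not change. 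In either case, adjoining a member of $\cC_\delta$ never decreases $\delta$, and it increases $\delta$ by at least one whenever it enlarges the set.

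Now write $V=F_1\cup\cdots\cup F_k$ with $F_i\in\cC_\delta$. Since $U\subsetneq V$, some $F_i$ is not contained in $U$; reorder so that $F_1\not\subseteq U$. Set $W_0=U$ and $W_j=U\cup F_1\cup\cdots\cup F_j$. By the estimate, $\delta(W_1)\ge\delta(U)+1$ and $\delta(W_{j})\ge\delta(W_{j-1})$ for every $j\ge2$. Since $U\subseteq V$, we have $W_k=V$, and therefore $\delta(V)\ge\delta(U)+1$.

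There is no serious obstacle here. The only points to check are that $U\cap F$ lies in $\cA$, which is where the lower ideal hypothesis is used, and that the empty union $U=\emptyset$ needs no special treatment, since $\delta(\emptyset)=0$ and the same argument applies verbatim.
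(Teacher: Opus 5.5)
Your proof is correct and follows the paper's argument: you adjoin members of $\cC_\delta$ to $U$ one at a time, and each enlarging step gains at least $1$ by supermodularity, the lower-ideal property, minimality of $F$, and integrality. The only difference is cosmetic: the paper omits repetitions to get a strict chain and records the stronger bound $\delta(V)\ge\delta(U)+m$, whereas you count only the first strict step and use that $\delta$ does not decrease along the remaining steps.
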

	
	\begin{proof}
		Write $V=F_1\cup\cdots\cup F_r$ with $F_j\in\cC_\delta$. Starting with $U$, add the sets $F_j$ one at a time and omit repetitions. This produces a strict chain
		\[
		U=U_0\subsetneq U_1\subsetneq\cdots\subsetneq U_m=V.
		\]
		It is enough to prove that $\delta(U_j)\ge\delta(U_{j-1})+1$ at each step.
		
		Fix $j$. Then $U_j=U_{j-1}\cup F$ for some $F\in\cC_\delta$ with $F\nsubseteq U_{j-1}$. It follows that $U_{j-1}\cap F\subsetneq F$. Since $F\in\cA$ and $\cA$ is a lower ideal, $U_{j-1}\cap F\in\cA$. By the minimality of $F$ among the positive sets, $\delta(U_{j-1}\cap F)\le0$. Also $\delta(F)>0$, and integrality strengthens this to $\delta(F)\ge1$. Applying supermodularity to $U_{j-1}$ and $F$ gives
		\[
		\delta(U_j)+\delta(U_{j-1}\cap F)
		\ge\delta(U_{j-1})+\delta(F).
		\]
		Using the two inequalities above gives
		\[
		\delta(U_j)\ge\delta(U_{j-1})+1.
		\]
		Iterating this inequality along the chain gives $\delta(V)\ge\delta(U)+m$, and in particular $\delta(V)\ge\delta(U)+1$.
	\end{proof}
	
	\begin{proof}[Proof of Theorem~\ref{thm:2}]
		Assume first that $q\ge0$. By Lemma~\ref{lem:2}, $\Delta_\delta$ is homotopy equivalent to $\Delta(\cL_\delta)$. Let
		\[
		U_1\subsetneq\cdots\subsetneq U_t
		\]
		be a strict chain in $\cL_\delta$. Lemma~\ref{lem:2} shows that $E$ is a union of members of $\cC_\delta$, while $\emptyset$ is the empty union. Therefore Lemma~\ref{lem:3} applies to every inclusion in
		\[
		\emptyset\subsetneq U_1\subsetneq\cdots\subsetneq U_t\subsetneq E.
		\]
		There are $t+1$ strict inclusions. Since $\delta(\emptyset)=0$, repeated application of Lemma~\ref{lem:3} gives
		\[
		\delta(E)\ge t+1.
		\]
		The chain $U_1\subsetneq\cdots\subsetneq U_t$ determines a simplex of $\Delta(\cL_\delta)$ of dimension $t-1$. Every simplex therefore has dimension at most $\delta(E)-2$, and hence
		\[
		\dim\Delta(\cL_\delta)\le\delta(E)-2.
		\]
		If $\widetilde H_q(\Delta_\delta;\kk)\ne0$, then $\widetilde H_q(\Delta(\cL_\delta);\kk)\ne0$ as well, and hence $q\le\dim\Delta(\cL_\delta)$. This gives $\delta(E)\ge q+2$.
		
		Now let $q=-1$. Nonzero $\widetilde H_{-1}(\Delta_\delta;\kk)$ means $\Delta_\delta=\{\emptyset\}$. The complex is not void, so the generating family in \eqref{eq:2} contains at least one simplex $E\setminus F$. Since the only face is empty, $E\setminus F=\emptyset$, hence $F=E\in\cC_\delta$. Since $\delta(E)>0$, integrality gives $\delta(E)\ge1=q+2$.
	\end{proof}
	
	\begin{remark}
		\begin{enumerate}[label=\textup{(\roman*)}]
			\item\label{rem:1} Integrality is essential. Let $E=\{1,2\}$, $\cA=2^E$, and set $\delta(\emptyset)=0$, $\delta(\{1\})=\delta(\{2\})=1/10$, and $\delta(E)=1/5$. Then $\delta$ is supermodular and $\Delta_\delta$ consists of two isolated vertices. Thus $\widetilde H_0(\Delta_\delta;\kk)\ne0$, although $\delta(E)<2$. The proof of Lemma~\ref{lem:3} uses integrality precisely when $\delta(F)>0$ is replaced by $\delta(F)\ge1$.
			\item\label{rem:2} The bound is sharp in every homological degree. Fix $q\ge-1$, let $|E|=q+2$, take $\cA=2^E$, and set $\delta(F)=|F|$. Then the minimal positive sets are the singletons, $\Delta_\delta$ is the boundary of the $(q+1)$-simplex, and $\delta(E)=q+2$.
			\item The argument is independent of the characteristic of $\kk$. The homology group may depend on the field, but whenever it is nonzero the same numerical bound follows.
		\end{enumerate}
	\end{remark}
	
	\section{Supermodular prime intersections and regularity}\label{sec:2}
	
	Let $E=[n]$, let $\cA\subseteq2^E$ be a lower ideal, and let $\tau:2^E\to\ZZ_{\ge0}$ be supermodular with $\tau(\emptyset)=0$. Variables outside $U=\bigcup_{F\in\cA}F$ do not occur in any prime $P_F$ with $F\in\cA$. The ring $S$ is a polynomial extension of the polynomial ring on the variables indexed by $U$, and each ideal considered below is extended from that smaller ring. Deleting the variables outside $U$ does not change either the degrees of the minimal generators or the regularity. After restricting $\tau$ to $2^U$, we may replace $E$ by $U$ and assume $E=\bigcup_{F\in\cA}F$. The function $\tau$ is nondecreasing. To see this, if $e\notin A$, supermodularity gives $\tau(A)+\tau(\{e\})\le\tau(A\cup\{e\})+\tau(\emptyset)$. Since $\tau(\emptyset)=0$ and $\tau(\{e\})\ge0$, we have $\tau(A)\le\tau(A\cup\{e\})$. Adding the elements of $B\setminus A$ one at a time gives $\tau(A)\le\tau(B)$ whenever $A\subseteq B$.
	
	For $s\ge1$, set
	\begin{equation}\label{eq:3}
		J_s=J_s(\cA,\tau)=\bigcap_{F\in\cA}P_F^{s\tau(F)},
		\qquad P_F=(x_i:i\in F),
	\end{equation}
	where factors of exponent zero are omitted.
	
	We recall the part of Takayama's formula needed below. Let $J\subseteq S$ be a monomial ideal and $\mathbf a=(a_1,\ldots,a_n)\in\ZZ^n$. Put $N=N(\mathbf a)=\{i:a_i<0\}$ and $S_U=S[x_i^{-1}:i\in U]$. The Takayama degree complex in multidegree $\mathbf a$ is
	\[
	\Delta_{\mathbf a}(J)=
	\{A\subseteq E\setminus N:x^{\mathbf a}\notin JS_{A\cup N}\}.
	\]
	With the standard reduced homology conventions, Takayama's formula \cite[Theorem~1]{Takayama} is
	\begin{equation}\label{eq:4}
		\dim_\kk H^\ell_{\m}(S/J)_{\mathbf a}
		=
		\dim_\kk\widetilde H_{\ell-|N|-1}
		(\Delta_{\mathbf a}(J);\kk).
	\end{equation}
	We also use the following formula from \cite[Proposition~2.5(ii)]{CHHKTT}.
	\[
	\reg(S/J)=
	\max\{ |\mathbf a|+\ell:
	H^\ell_{\m}(S/J)_{\mathbf a}\ne0\}.
	\]
	
	The degree complex depends on the set of negative coordinates of $\mathbf a$, not on their actual negative values. For an upper bound on regularity, we may therefore replace each negative coordinate by $-1$. The degree complex is unchanged and $|\mathbf a|$ can only increase. We call such a multidegree \emph{normalized}. Put $V=E\setminus N$ and, for $F\subseteq V$, write $a(F)=\sum_{i\in F}a_i$. Let $\mathbf e_i$ denote the $i$th standard unit vector. All $a_i$ with $i\in V$ are nonnegative. If $\widetilde H_q(\Delta_{\mathbf a}(J);\kk)\ne0$, then Takayama's formula \eqref{eq:4} gives $\ell=q+|N|+1$. Since $|\mathbf a|=a(V)-|N|$ and $\reg J=\reg(S/J)+1$, the value entering the regularity formula is
	\begin{equation*}
		|\mathbf a|+\ell+1=a(V)+q+2.
	\end{equation*}
	It is therefore enough to bound the expression on the right whenever the degree complex has nonzero reduced homology.
	
	\begin{proposition}\label{prop:1}
		Let $\mathbf a\in\ZZ^E$ be normalized, put $N=N(\mathbf a)$ and $V=E\setminus N$. Then
		\[
		\Delta_{\mathbf a}(J_s)=
		\left\langle V\setminus F:
		F\in\cA,\ F\subseteq V,\ a(F)<s\tau(F)\right\rangle.
		\]
	\end{proposition}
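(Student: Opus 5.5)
We unwind the definition of the Takayama degree complex directly. Fix a normalized $\mathbf a$, so $a_i=-1$ for $i\in N$ and $a_i\ge0$ for $i\in V$. For $A\subseteq V$ we must decide when $x^{\mathbf a}\in J_sS_{A\cup N}$.

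The first step is to describe localization of a single prime power. For $W\subseteq E$, localizing at the variables in $W$ gives $P_F^{k}S_W=S_W$ if $F\cap W\ne\emptyset$ and $k\ge1$. Otherwise it is the extension of $P_F^k$, since $P_F^k$ involves only variables indexed by $F$. Localization commutes with finite intersections, so
\[
J_sS_W=\bigcap_{F\in\cA,\ F\cap W=\emptyset}P_F^{s\tau(F)}S_W .
\]
Factors with $\tau(F)=0$ are omitted in any case.

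The second step is a membership test. Let $W=A\cup N$ and take $F$ with $F\cap W=\emptyset$, so $F\subseteq V\setminus A$. Then $x^{\mathbf a}\in P_F^{k}S_W$ if and only if $a(F)\ge k$. To see this, note that the exponents $a_i$ for $i\in F$ are all nonnegative, while the variables in $W$ are units. Membership of a Laurent monomial in the extension of the monomial ideal $P_F^k$ therefore depends only on its $F$-part: $x^{\mathbf a}$ lies in $P_F^kS_W$ exactly when $\prod_{i\in F}x_i^{a_i}\in P_F^k$. That happens exactly when $\sum_{i\in F}a_i\ge k$.

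Some care is needed for coordinates outside $F\cup W$, which are nonnegative and not inverted. Multiplying by such monomials, or by units, preserves membership. Conversely, $P_F^kS_W\cap(\text{Laurent monomials})$ is determined coordinatewise on $F$. So the test holds. Combining the two steps,
\[
A\in\Delta_{\mathbf a}(J_s)\iff \exists\,F\in\cA,\ F\subseteq V\setminus A,\ a(F)<s\tau(F).
\]

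The third step reads off the facets. The condition says that $A\subseteq V\setminus F$ for some $F\in\cA$ with $F\subseteq V$ and $a(F)<s\tau(F)$. Conversely, every subset of such a $V\setminus F$ satisfies the condition with the same $F$, since $F\cap A=\emptyset$. Hence $\Delta_{\mathbf a}(J_s)$ is generated by the sets $V\setminus F$, which is the claimed equality.

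Note that $a(F)<s\tau(F)$ forces $\tau(F)>0$, so omitted factors never contribute, consistent with the convention in \eqref{eq:3}. The void case is also consistent: if no such $F$ exists, both sides are void.

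The main obstacle is the membership test in the localization. We must argue cleanly that for a monomial ideal extended to $S_W$, membership of a monomial is checked on the non-inverted coordinates. The inverted coordinates here include $N$, where the exponents are negative. This is standard: $JS_W\cap S$ is obtained by setting $x_i=1$ for $i\in W$. Once that is stated, everything else is bookkeeping.
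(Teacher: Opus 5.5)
Your proof is correct and follows essentially the same route as the paper. Both arguments commute localization with the finite intersection and discard the factors with $F\cap(A\cup N)\ne\emptyset$, which become the unit ideal. For the surviving factors, where no variable of $F$ is inverted and the exponents on $F$ are nonnegative, both reduce membership to the test $a(F)\ge s\tau(F)$. Your remarks on the zero-exponent factors and the void case are consistent additions that the paper leaves implicit.
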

	
	\begin{proof}
		Fix $A\subseteq V$. By definition, $A\in\Delta_{\mathbf a}(J_s)$ if and only if $x^{\mathbf a}\notin J_sS_{A\cup N}$. Because the defining intersection for $J_s$ in \eqref{eq:3} is finite, localization commutes with it. It follows that the condition on the right holds if and only if $x^{\mathbf a}\notin P_F^{s\tau(F)}S_{A\cup N}$ for at least one factor. Only certain factors can affect this condition.
		
		If $F\cap(A\cup N)\ne\emptyset$, choose $i$ in the intersection. Then $x_i$ is both a generator of $P_F$ and a unit in $S_{A\cup N}$, so $P_F^{s\tau(F)}S_{A\cup N}=S_{A\cup N}$. Such a factor does not affect membership in the localized intersection.
		
		Suppose instead that $F\cap(A\cup N)=\emptyset$. Since $V=E\setminus N$, this is equivalent to $F\subseteq V$ and $A\subseteq V\setminus F$. No variable indexed by $F$ is inverted. Membership in the power of the monomial prime is therefore determined by the total exponent on $F$: $x^{\mathbf a}\in P_F^{s\tau(F)}S_{A\cup N}$ if and only if $a(F)\ge s\tau(F)$. So $A$ is a face of the degree complex if and only if it is contained in $V\setminus F$ for some $F\in\cA$, with $F\subseteq V$ and $a(F)<s\tau(F)$. This proves Proposition~\ref{prop:1}.
	\end{proof}
	
	\begin{theorem}\label{thm:3}
		Assume that $J_1$ is proper. Then, for every $s\ge1$,
		\[
		s\omega(J_1)\le\omega(J_s)\le\reg J_s\le s\tau(E).
		\]
		Moreover, if $x^{\mathbf u}\in \mathcal{G}(J_1)$, then $x^{s\mathbf u}\in \mathcal{G}(J_s)$ for every $s\ge1$. If $\omega(J_1)=\tau(E)$, then every $J_s$ has a degree resolution and
		\[
		\omega(J_s)=\reg J_s=s\tau(E)
		\qquad(s\ge1).
		\]
	\end{theorem}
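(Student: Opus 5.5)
The plan has three parts: the regularity bound via Takayama's formula, the scaling property for minimal generators, and the assembly of the inequalities.

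\emph{Regularity bound.} Following the reduction before Proposition~\ref{prop:1}, it suffices to show $a(V)+q+2\le s\tau(E)$ whenever $\mathbf a$ is normalized and $\widetilde H_q(\Delta_{\mathbf a}(J_s);\kk)\ne0$. Fix such $\mathbf a$ and set $V=E\setminus N$. Let $\cA_V=\{F\in\cA:F\subseteq V\}$; this is a lower ideal in $2^V$. On $2^V$ define
\[
\delta(F)=s\tau(F)-a(F).
\]
This function is integer valued, satisfies $\delta(\emptyset)=0$, and is supermodular, because $s\tau$ is supermodular and $a(\cdot)$ is modular. By Proposition~\ref{prop:1} and Lemma~\ref{lem:1}, applied with ground set $V$, the degree complex $\Delta_{\mathbf a}(J_s)$ is exactly the noncover complex $\Delta_\delta$. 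Theorem~\ref{thm:2} then gives $\delta(V)\ge q+2$, that is, $a(V)+q+2\le s\tau(V)$. Since $\tau$ is nondecreasing, $s\tau(V)\le s\tau(E)$.

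Two edge cases need checking. If $V=\emptyset$, the complex is either void or $\{\emptyset\}$, and Theorem~\ref{thm:2} still applies with $q=-1$. When the complex is void there is no homology, so nothing needs to be bounded. Taking the maximum over all $\mathbf a$ gives $\reg J_s\le s\tau(E)$. The inequality $\omega(J_s)\le\reg J_s$ is standard.

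\emph{Scaling of generators.} Let $x^{\mathbf u}\in\mathcal G(J_1)$. Membership $x^{\mathbf b}\in J_s$ means $b(F)\ge s\tau(F)$ for all $F\in\cA$. Since $u(F)\ge\tau(F)$, we get $x^{s\mathbf u}\in J_s$. For minimality, suppose $x^{s\mathbf u-\mathbf e_i}\in J_s$ for some $i$ with $u_i>0$. Because $x^{\mathbf u}$ is a minimal generator, $x^{\mathbf u-\mathbf e_i}\notin J_1$. Hence there is $F\in\cA$ with $i\in F$ and $u(F)-1<\tau(F)$, so $u(F)=\tau(F)$ by integrality. Then $su(F)-1<s\tau(F)$, which contradicts $x^{s\mathbf u-\mathbf e_i}\in J_s$. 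Thus $x^{s\mathbf u}\in\mathcal G(J_s)$, and applying this to a generator of maximal degree gives $s\omega(J_1)\le\omega(J_s)$.

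\emph{Equality case.} If $\omega(J_1)=\tau(E)$, the chain of inequalities collapses, giving $\omega(J_s)=\reg J_s=s\tau(E)$. The properness assumption on $J_1$ ensures that $J_s$ is a proper ideal, so that $\omega$ and $\reg$ are meaningful.

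The main obstacle is the identification of the degree complex with $\Delta_\delta$. One must check that the ground set and lower ideal are handled correctly: the relevant ground set is $V$, not $E$, and $\cA$ must be restricted to $\cA_V$. One must also check that the void-complex and $\{\emptyset\}$ conventions in Theorem~\ref{thm:2} match Takayama's conventions.
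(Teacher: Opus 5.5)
Your proposal is correct and follows the paper's proof essentially step for step. You identify the degree complex with the noncover complex of $\delta=s\tau-a$ on the restricted lower ideal $\cA_V$, use Theorem~\ref{thm:2} plus monotonicity of $\tau$ for $\reg J_s\le s\tau(E)$, and use the tight inequality $u(F)=\tau(F)$ through each $i$ with $u_i>0$ for the scaling of minimal generators. The only small difference is the case $V=\emptyset$: the paper notes that the complex is then always void, since every variable is inverted and $J_sS_E$ is the unit ideal, so the $\{\emptyset\}$ alternative you allow for never actually occurs.
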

	
	\begin{proof}
		We first prove the regularity bound. Fix $s\ge1$ and a normalized multidegree $\mathbf a$ such that $\widetilde H_q(\Delta_{\mathbf a}(J_s);\kk)\ne0$. Let $N=N(\mathbf a)$ and $V=E\setminus N$. The case $V=\emptyset$ cannot occur. In that case every variable is inverted in $S_N$, every nonzero monomial ideal becomes the whole ring, and the degree complex is void, whose reduced homology is zero.
		
		For $F\subseteq V$, define $\delta(F)=s\tau(F)-a(F)$. The function $F\mapsto a(F)$ is modular. Thus $\delta$ is integer-valued and supermodular, with $\delta(\emptyset)=0$.
		
		Restrict $\cA$ to $2^V$. Proposition~\ref{prop:1} identifies $\Delta_{\mathbf a}(J_s)$ with the complex generated by $V\setminus F$ for those $F$ with $\delta(F)>0$. By Lemma~\ref{lem:1}, this is the noncover complex associated with the minimal positive sets of $\delta$. Theorem~\ref{thm:2}, now applied with ground set $V$, gives
		\[
		s\tau(V)-a(V)=\delta(V)\ge q+2.
		\]
		Equivalently,
		\[
		a(V)+q+2\le s\tau(V)\le s\tau(E),
		\]
		where the last inequality uses the monotonicity of $\tau$. By the regularity contribution formula above, the multidegree $\mathbf a$ contributes the value $a(V)+q+2$ to the regularity formula. The same bound holds for every multidegree with nonzero homology, so
		\[
		\reg J_s\le s\tau(E).
		\]
		
		It remains to prove the assertion on minimal generators. Let $x^{\mathbf u}\in \mathcal{G}(J_1)$. Membership in $J_1$ is equivalent to the system
		\begin{equation*}
			u(F)\ge\tau(F)
			\qquad(F\in\cA),
		\end{equation*}
		where $u(F)=\sum_{i\in F}u_i$.
		
		Fix $i$ with $u_i>0$. Minimality of $x^{\mathbf u}$ means $x^{\mathbf u-\mathbf e_i}\notin J_1$. At least one of these inequalities therefore fails after decreasing the $i$th coordinate. Choose $F_i\in\cA$ such that $(\mathbf u-\mathbf e_i)(F_i)<\tau(F_i)$. Necessarily $i\in F_i$. Otherwise the left side would equal $u(F_i)\ge\tau(F_i)$. We obtain $u(F_i)-1<\tau(F_i)\le u(F_i)$. All quantities are integers, so
		\begin{equation*}
			u(F_i)=\tau(F_i).
		\end{equation*}
		For every variable dividing $x^{\mathbf u}$, we have therefore found a defining inequality that is an equality.
		
		Multiplying the defining inequalities by $s$ shows $x^{s\mathbf u}\in J_s$. To check minimality, divide by any variable $x_i$ with $u_i>0$. The equality $u(F_i)=\tau(F_i)$ gives $(s\mathbf u-\mathbf e_i)(F_i)=s\tau(F_i)-1<s\tau(F_i)$, so $x^{s\mathbf u-\mathbf e_i}\notin J_s$. Therefore $x^{s\mathbf u}/x_i\notin J_s$ for every variable $x_i$ dividing $x^{s\mathbf u}$. This is the usual minimality criterion for a monomial generator. Therefore $x^{s\mathbf u}\in \mathcal{G}(J_s)$. Choose $x^{\mathbf u}\in \mathcal{G}(J_1)$ with $|\mathbf u|=\omega(J_1)$. Its scaled generator has degree $s\omega(J_1)$, so $s\omega(J_1)\le\omega(J_s)$. Together with the regularity bound and the general inequality $\omega(J_s)\le\reg J_s$, this gives
		\[
		s\omega(J_1)\le\omega(J_s)\le\reg J_s\le s\tau(E).
		\]
		If $\omega(J_1)=\tau(E)$, the first and last terms coincide, so every inequality is an equality. We then have $\reg J_s=\omega(J_s)$ for every $s\ge1$, and each $J_s$ has a degree resolution. This proves the final assertion.
	\end{proof}
	
	\begin{corollary}\label{cor:1}
		Let $L\subsetneq S$ be a nonzero monomial ideal. Suppose there are a lower ideal $\cA\subseteq2^E$ and a supermodular function $\tau:2^E\to\ZZ_{\ge0}$ with $\tau(\emptyset)=0$ such that
		\[
		L^{(s)}=\bigcap_{F\in\cA}P_F^{s\tau(F)}
		\qquad(s\ge1).
		\]
		Then
		\[
		s\omega(L)\le\omega(L^{(s)})\le\reg L^{(s)}\le s\tau(E)
		\qquad(s\ge1).
		\]
		If $\omega(L)=\tau(E)$, then every $L^{(s)}$ has a degree resolution and
		\[
		\omega(L^{(s)})=\reg L^{(s)}=s\tau(E).
		\]
	\end{corollary}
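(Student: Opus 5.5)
The plan is to apply Theorem~\ref{thm:3} to the family $J_s=J_s(\cA,\tau)$ given by the hypothesis. By assumption $J_s=L^{(s)}$ for every $s\ge1$. In particular $J_1=L^{(1)}$, so the first step is to identify $L^{(1)}$ with $L$.

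With the convention \eqref{eq:1}, which uses all associated primes, we have $L^{(1)}=\bigcap_{P\in\Ass(S/L)}(LS_P\cap S)$. Each $LS_P\cap S$ is the intersection of the $P'$-primary components of an irredundant primary decomposition of $L$ with $P'\subseteq P$. Taking the intersection over all associated primes $P$ therefore recovers every primary component, and $L^{(1)}=L$. Hence $J_1=L$, which is nonzero and proper by hypothesis.

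The second step is to check the remaining hypotheses of Theorem~\ref{thm:3}. We need $\cA$ to be a lower ideal and $\tau$ to be a nonnegative integer-valued supermodular function with $\tau(\emptyset)=0$; both are given. We also need $J_1$ to be proper, which we have just established. The normalization $E=\bigcup_{F\in\cA}F$ made at the start of Section~\ref{sec:2} does not change $\omega$ or $\reg$. Restricting $\tau$ can only decrease the bound $s\tau(E)$ by monotonicity of $\tau$, so the bound with the original $E$ still holds.

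Theorem~\ref{thm:3} then gives directly
\[
s\omega(L)=s\omega(J_1)\le\omega(J_s)=\omega(L^{(s)})\le\reg L^{(s)}\le s\tau(E),
\]
together with the equality case: if $\omega(L)=\tau(E)$, all terms coincide, so $\omega(L^{(s)})=\reg L^{(s)}=s\tau(E)$ and each $L^{(s)}$ has a degree resolution.

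The only point requiring care is the identification $L^{(1)}=L$, which is exactly where the convention of using all associated primes is essential. Everything else is a direct substitution into Theorem~\ref{thm:3}.
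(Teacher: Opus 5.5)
Your proposal is correct and follows essentially the same route as the paper: the hypothesis gives $L^{(s)}=J_s(\cA,\tau)$, so $J_1=L^{(1)}=L$, and then Theorem~\ref{thm:3} applies directly. You also spell out two points that the paper's one-line proof leaves implicit: why $L^{(1)}=L$ under the all-associated-primes convention, and the monotonicity of $\tau$ needed for the normalization of $E$.
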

	
	\begin{proof}
		By the hypothesis of Corollary~\ref{cor:1}, $L^{(s)}=J_s(\cA,\tau)$ for every $s$, and $L=J_1$. Apply Theorem~\ref{thm:3}.
	\end{proof}
	
	\begin{example}\label{ex:1}
		Let $L=(x_1,x_2)\cap(x_2,x_3)=(x_2,x_1x_3)\subseteq\kk[x_1,x_2,x_3]$. This squarefree radical ideal is not polymatroidal because its minimal generators have different degrees. Let $\cA=2^{\{1,2\}}\cup2^{\{2,3\}}$ and define $\tau(F)=\max\{|F|-1,0\}$ on $2^{\{1,2,3\}}$. The first differences of $k\mapsto\max\{k-1,0\}$ are nondecreasing, so the associated cardinality function is supermodular. On $\cA$, the only sets with positive $\tau$ are $\{1,2\}$ and $\{2,3\}$, both with value $1$. Since $L$ is radical,
		\[
		L^{(s)}=(x_1,x_2)^s\cap(x_2,x_3)^s=J_s(\cA,\tau).
		\]
		Here $\omega(L)=2=\tau(E)$, so Corollary~\ref{cor:1} shows that every $L^{(s)}$ has a degree resolution and
		\[
		\omega(L^{(s)})=\reg L^{(s)}=2s
		\qquad(s\ge1).
		\]
	\end{example}
	
	We next consider squarefree monomial ideals. H\`a and Trung showed that $\omega(L^{(s)})\le s\omega(L)$ need not hold in general \cite[Section~6]{HaNgo}. The next proposition replaces $\omega(L)$ by a bound determined by the support and the height. For a simplicial complex $\Gamma$, let $\mathcal F(\Gamma)$ denote its facets and let $I_\Gamma$ denote its Stanley--Reisner ideal.
	
	\begin{proposition}\label{prop:2}
		Let $L\subsetneq S$ be a nonzero squarefree monomial ideal and let $h=\operatorname{ht}L$. Then, for every $s\ge1$,
		\[
		\omega(L^{(s)})\le s\bigl(\lambda(L)-h+1\bigr).
		\]
		Equivalently, if $T$ is the polynomial ring on the variables that occur in $L$, then
		\[
		\omega(L^{(s)})\le s\bigl(\dim(T/L)+1\bigr).
		\]
	\end{proposition}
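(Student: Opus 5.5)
The plan is to work directly with the minimal primes of $L$ and the exponent vector of a minimal generator, imitating the tightness argument in the proof of Theorem~\ref{thm:3}, rather than invoking the supermodular machinery. Since $L$ is squarefree it is radical, so $\Ass(S/L)=\operatorname{Min}(S/L)$. The definition \eqref{eq:1} then gives
\[
L^{(s)}=\bigcap_{P\in\operatorname{Min}(S/L)}P^s .
\]
Write each minimal prime as $P=P_F$ with $F\subseteq[n]$. Two facts about these sets will be used. First, each $F$ satisfies $F\subseteq\operatorname{supp}(L)$, because $P_F$ is minimal over $L$ and is generated by variables occurring in $\mathcal G(L)$. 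Second, $|F|\ge h$. Also $\lambda(L)=|\operatorname{supp}(L)|$, since $L$ is squarefree. Membership of $x^{\mathbf u}$ in $L^{(s)}$ is the system of inequalities $u(F)\ge s$ for every minimal prime $P_F$.

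Fix $x^{\mathbf u}\in\mathcal G(L^{(s)})$. First I would show that $\operatorname{supp}(\mathbf u)\subseteq\operatorname{supp}(L)$. If $u_i>0$ for some $i\notin\operatorname{supp}(L)$, then $i$ lies in no $F$, so $x^{\mathbf u}/x_i$ still satisfies every inequality, which contradicts minimality.

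Next, exactly as in the proof of Theorem~\ref{thm:3}, for each $i$ with $u_i>0$ the minimality of $x^{\mathbf u}$ yields a minimal prime $P_{F_i}$ with $i\in F_i$ and $u(F_i)=s$. Integrality gives this equality from $u(F_i)-1<s\le u(F_i)$. Two consequences follow.
\begin{enumerate}[label=\textup{(\alph*)}]
\item Every coordinate satisfies $u_i\le u(F_i)=s$.
\item Since $L$ is proper, $L^{(s)}$ is proper, so $\mathbf u\ne0$, and at least one tight prime $P_{F}$ with $u(F)=s$ exists.
\end{enumerate}

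Now split the degree of $x^{\mathbf u}$ along this tight set $F$:
\[
|\mathbf u|=u(F)+\sum_{j\in\operatorname{supp}(L)\setminus F}u_j\le s+s\bigl(\lambda(L)-|F|\bigr)\le s\bigl(\lambda(L)-h+1\bigr).
\]
Here the first inequality uses (a) and the support containment, and the second uses $|F|\ge h$. Taking the maximum over $\mathcal G(L^{(s)})$ gives the claimed bound on $\omega(L^{(s)})$.

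For the equivalent formulation, let $T$ be the polynomial ring on the $\lambda(L)$ variables occurring in $L$. Extension from $T$ to $S$ preserves the height, so $\dim(T/L)=\lambda(L)-h$.

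I do not expect a serious obstacle. The only points needing care are the support containment $\operatorname{supp}(\mathbf u)\subseteq\operatorname{supp}(L)$ and the existence of at least one tight prime, and both come from the minimality criterion already used in Theorem~\ref{thm:3}.
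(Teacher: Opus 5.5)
Your proof is correct and is essentially the paper's argument. The paper writes $L^{(s)}$ as the intersection of $P_{E\setminus F}^{\,s}$ over the facets $F$ of the Stanley--Reisner complex, finds a tight component with $a(E\setminus F_j)=s$, bounds every coordinate by $s$, and splits the degree as $s+a(F_j)\le s+s|F_j|$ with $|F_j|\le\dim(T/L)$. This is exactly your split, with your minimal-prime support $F$ being the complement of a facet and $|F|\ge h$ playing the role of $|F_j|\le\dim(T/L)$. The only cosmetic difference is that you check $\operatorname{supp}(\mathbf u)\subseteq\operatorname{supp}(L)$ explicitly, whereas the paper simply works in $T$ from the start.
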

	
	\begin{proof}
		Let $E=\operatorname{supp}(L)$ and work in $T=\kk[x_i:i\in E]$. Because $L$ is squarefree, $|E|=\lambda(L)$ and $\operatorname{ht}_T L=h$. Write $L=I_\Gamma$. Since $L$ is radical,
		\[
		L^{(s)}=\bigcap_{F\in\mathcal F(\Gamma)}P_{E\setminus F}^{\,s}.
		\]
		Put $d=\dim(T/L)=|E|-h$. Every facet $F$ of $\Gamma$ has $|F|\le d$.
		
		Choose $x^{\mathbf a}\in \mathcal{G}(L^{(s)})$ and fix $j$ with $a_j>0$. Since $x^{\mathbf a-\mathbf e_j}\notin L^{(s)}$, there is a facet $F_j$ such that
		\[
		(\mathbf a-\mathbf e_j)(E\setminus F_j)<s\le a(E\setminus F_j).
		\]
		Necessarily $j\notin F_j$, and all quantities are integers. Hence $a(E\setminus F_j)=s$. In particular,
		\begin{equation*}
			a_j\le s
			\qquad\text{for every }j\in\operatorname{supp}(\mathbf a).
		\end{equation*}
		
		Fix one such $j$. Then
		\[
		|\mathbf a|=a(E\setminus F_j)+a(F_j)
		=s+a(F_j)\le s+s|F_j|\le s(d+1).
		\]
		Therefore every minimal generator of $L^{(s)}$ has degree at most $s(d+1)=s\bigl(\lambda(L)-h+1\bigr)$. Since $d=\dim(T/L)$, the equivalent form follows as well.
	\end{proof}
	
	For $s=1$, Proposition~\ref{prop:2} proves the bound $\omega(L)\le D$. The next corollary shows directly that $D$ also controls the regularity of every symbolic power and combines this estimate with a bound in terms of the number of minimal primes.
	
	\begin{corollary}\label{cor:2}
		Let $L\subsetneq S$ be a nonzero squarefree monomial ideal, let $T$ be the polynomial ring on the variables that occur in $L$, and put $m=|\operatorname{Min}(S/L)|$ and $D=\lambda(L)-\operatorname{ht}L+1=\dim(T/L)+1$. Then, for every $s\ge1$,
		\[
		s\omega(L)\le\omega(L^{(s)})\le\reg L^{(s)}\le s\min\{m,D\}.
		\]
		In particular, $\omega(L)\le\min\{m,D\}$.
	\end{corollary}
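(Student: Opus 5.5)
The plan is to write $L^{(s)}$ as a supermodular prime intersection in two different ways and apply Corollary~\ref{cor:1} to each. One representation gives the bound $sm$ and the other gives $sD$; together they give the minimum. Throughout we work in $T$ on $E=\operatorname{supp}(L)$, which changes neither $\omega$ nor regularity, as noted at the start of Section~\ref{sec:2}. Write $L=\bigcap_{i=1}^m P_{G_i}$ with the $G_i$ minimal prime supports. Since $L$ is radical, $L^{(s)}=\bigcap_i P_{G_i}^s$.

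\emph{The $sD$ bound.} Take $\cA=\bigcup_i 2^{G_i}$, a lower ideal, and put $\tau(F)=\max\{|F|-h+1,0\}$ with $h=\operatorname{ht}L$. This is a nondecreasing convex function of $|F|$, so it is supermodular, and $\tau(\emptyset)=0$ because $h\ge1$. We must check that $\bigcap_{F\in\cA}P_F^{s\tau(F)}=\bigcap_i P_{G_i}^s$. This is where unmixedness would be used, and it is the step I expect to be the main obstacle. A minimal prime $G_i$ with $|G_i|>h$ gets exponent $s(|G_i|-h+1)>s$, which is too big. It is also not clear how a subset of size exactly $h$ inside a larger $G_i$ behaves.

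So I would instead modify $\tau$. For $F\in\cA$ set
\[
\tau(F)=\max\{\,|F|-|G|+1 : G\text{ a minimal prime support},\ G\supseteq F\,\}^+,
\]
and extend it to $2^E$ by $\tau(F)=\max\{|F|-h+1,0\}$ off $\cA$. Supermodularity of such a patched function is not obvious. Since only the values on $\cA$ matter (Section~\ref{sec:1} and Proposition~\ref{prop:1} only use $\tau$ on $\cA$ together with the inequality $\tau(V)\le\tau(E)$ at the end), it is cleaner to rerun the proof of Theorem~\ref{thm:3} directly on the family $\{P_{G_i}^s\}$. Following Proposition~\ref{prop:1}, the degree complex in a normalized multidegree $\mathbf a$ is generated by the sets $V\setminus G_i$ with $G_i\subseteq V$ and $a(G_i)<s$. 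Consider the set function $\delta(F)=s\cdot\mathbf 1[F\supseteq\text{some }G_i\text{ with } a(G_i)<s]$ restricted appropriately; a cleaner choice is $\delta(F)=s\max\{|F|-|E|+D,0\}-a(F)$. One then bounds the homological degree $q$ by the dimension of the lattice of unions, as in Theorem~\ref{thm:2}. A strict chain $\emptyset\subsetneq U_1\subsetneq\cdots\subsetneq E$ of unions of the $G_i$ has length at most $|E|-h+1=D$, because $|U_1|\ge h$. This gives $q+2\le D$. Combining with $a(V)<s\cdot(\text{stuff})$ should yield $a(V)+q+2\le sD$. Making the bound on $a(V)$ match is the delicate point; I would track it via Lemma~\ref{lem:3}-style increments, using that each union step adds a set $G$ with $a(G)<s$ and at least one new element.

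\emph{The $sm$ bound.} Index by minimal primes rather than by variables. One could use $\delta$ on subsets of $\{1,\dots,m\}$, or note that the lattice $\cL$ of proper unions of the relevant $G_i$ has chains of length at most $m-1$, so $q\le m-2$. Each step of a chain adds some $G_i$, and $a(G_i)\le s-1$ bounds $a$ on the added part. This gives $a(V)\le m(s-1)$, and hence $a(V)+q+2\le ms-m+m=sm$ after using $q+2\le m$.

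Finally, the lower bound $s\omega(L)\le\omega(L^{(s)})$ follows from the scaling argument in Theorem~\ref{thm:3}. That argument only uses the intersection representation with exponents $s\cdot 1$ on each $G_i$, which is the case $\tau\equiv1$ on the $G_i$. Then $\omega\le\reg$ holds in general, and taking $s=1$ yields $\omega(L)\le\min\{m,D\}$.
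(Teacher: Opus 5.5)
\textbf{The gap is in the $sD$ bound.} This is the one part of the corollary that Theorem~\ref{thm:3} does not give you directly.

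\begin{itemize}
\item[(a)] You derive $q+2\le D$ from chain lengths (with $V$ in place of $E$). The matching estimate on $a(V)$, however, is only announced (``should yield'', ``the delicate point''), so $a(V)+q+2\le sD$ is never proved.
\item[(b)] Your ``cleaner choice'' $\delta(F)=s\max\{|F|-|E|+D,0\}-a(F)$ is, since $|E|-D=h-1$, exactly $s\tau_h(F)-a(F)$ for the function $\tau_h(F)=\max\{|F|-h+1,0\}$ you had just rejected. For non-unmixed $L$, its positive sets in $\cA$ include $h$-element subsets of larger minimal prime supports. Its noncover complex is therefore not the degree complex of $L^{(s)}$, and Theorem~\ref{thm:2} cannot be applied to it. Already for $L=(x_1)\cap(x_2,x_3)$ one gets $J_1(\cA,\tau_h)=(x_1x_2x_3)\ne L$.
\end{itemize}

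Discard that $\delta$ and work with the relevant family $\mathcal R=\{G_i: G_i\subseteq V,\ a(G_i)\le s-1\}$, whose complements generate the degree complex.

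\textbf{How to close it.} The missing estimate follows from ingredients you already name.
\begin{itemize}
\item[(1)] For $q\ge0$, the cone argument of Lemma~\ref{lem:2} shows that $\mathcal R$ covers $V$ and that $V\notin\mathcal R$.
\item[(2)] Choose $G_{i_1},\dots,G_{i_r}\in\mathcal R$ covering $V$, each adding a new element. Since $|G_{i_1}|\ge h$, you get $r\le|V|-h+1$.
\item[(3)] Since $a\ge0$ on $V$, $a(V)\le\sum_j a(G_{i_j})\le(s-1)(|V|-h+1)$.
\item[(4)] Also $q+2\le|V|-h+1$, most simply because each generating simplex $V\setminus G_i$ has at most $|V|-h$ vertices. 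No lattice of unions is needed for this.
\item[(5)] Adding gives $a(V)+q+2\le s(|V|-h+1)\le sD$.
\item[(6)] For $q=-1$ one has $\mathcal R=\{V\}$, and the contribution is $a(V)+1\le s$.
\end{itemize}
This is essentially the paper's argument. The paper phrases the covering step as the pointwise bound $a_i\le s-1$ on $V$ (otherwise the complex is a cone with apex $i$), and then splits $a(V)=a(G)+a(V\setminus G)$ for a single relevant $G$.

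\textbf{The $sm$ bound.} Your direct covering count is correct, once you again add the case $q=-1$. The paper, however, obtains it with no extra work. The function $\tau(F)=|\{j:G_j\subseteq F\}|$ is a sum of supermodular indicators. It equals $1$ exactly on the $G_j$ within $\cA=\bigcup_j2^{G_j}$, and $\tau(E)=m$. Theorem~\ref{thm:3} therefore yields the $sm$ bound and the scaling lower bound at once. This is the supermodular representation your opening plan was looking for; only the $sD$ half needs a direct degree-complex argument.
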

	
	\begin{proof}
		Write $\operatorname{Min}(S/L)=\{P_{C_1},\ldots,P_{C_m}\}$. Since $L$ is squarefree, $L^{(s)}=\bigcap_{j=1}^mP_{C_j}^s$. The sets $C_1,\ldots,C_m$ form an antichain. Let
		\[
		\cA=\bigcup_{j=1}^m2^{C_j}
		\qquad\text{and}\qquad
		\tau(F)=\left|\{j:C_j\subseteq F\}\right|.
		\]
		For a fixed $C\subseteq E$, the indicator $\varepsilon_C(F)=1$ if $C\subseteq F$ and $0$ otherwise is supermodular. Hence $\tau=\sum_{j=1}^m\varepsilon_{C_j}$ is supermodular and $\tau(\emptyset)=0$. If $F\in\cA$ and $\tau(F)>0$, then $C_j\subseteq F\subseteq C_k$ for some $j,k$, and the antichain property forces $F=C_j=C_k$. Thus $L^{(s)}=J_s(\cA,\tau)$ and $\tau(E)=m$. Theorem~\ref{thm:3} gives
		\[
		s\omega(L)\le\omega(L^{(s)})\le\reg L^{(s)}\le sm.
		\]
		
		It remains to prove the bound $\reg L^{(s)}\le sD$ without an asymptotic argument. Let $E=\operatorname{supp}(L)$, write $L=I_\Gamma\subseteq T$, and put $d=\dim(T/L)$, so $D=d+1$. Then $L^{(s)}=\bigcap_{F\in\mathcal F(\Gamma)}P_{E\setminus F}^{\,s}$. Let $\mathbf a\in\ZZ^E$ be a normalized multidegree such that $\widetilde H_q(\Delta_{\mathbf a}(L^{(s)});\kk)\ne0$. Put $N=N(\mathbf a)$, $V=E\setminus N$, and let $\mathbf b=\mathbf a|_V$. The same localization argument as in Proposition~\ref{prop:1} gives
		\[
		\Delta_{\mathbf a}(L^{(s)})=
		\left\langle F\setminus N:
		F\in\mathcal F(\Gamma),\ N\subseteq F,\ b(E\setminus F)\le s-1
		\right\rangle.
		\]
		Assume first that $q\ge0$. If $b_i\ge s$ for some $i\in V$, then every simplex in the displayed generating family contains $i$; otherwise $b(E\setminus F)\ge b_i\ge s$. Hence the degree complex is a cone, contradicting the nonvanishing of its reduced homology. Thus $b_i\le s-1$ for every $i\in V$.
		
		Choose a facet $F$ occurring in the generating family. Since $N\subseteq F$ and $|F|\le d$,
		\[
		\begin{aligned}
			b(V)
			&=b(E\setminus F)+b(F\setminus N)\\
			&\le(s-1)+(s-1)|F\setminus N|\\
			&\le(s-1)(d-|N|+1).
		\end{aligned}
		\]
		Every generating simplex has at most $d-|N|$ vertices, so $q+2\le d-|N|+1$. By the regularity contribution formula above, the contribution of $\mathbf a$ to the regularity is at most
		\[
		b(V)+q+2\le s(d-|N|+1)\le s(d+1)=sD.
		\]
		If $q=-1$, then $\Delta_{\mathbf a}(L^{(s)})=\{\emptyset\}$. Hence some facet $F\in\mathcal F(\Gamma)$ satisfies $F=N$ and $b(E\setminus F)\le s-1$. Thus $V=E\setminus F$, and the regularity contribution formula above gives $b(V)+1\le s\le sD$. Therefore $\reg L^{(s)}\le sD$. Combining the two bounds gives $\reg L^{(s)}\le s\min\{m,D\}$. The remaining inequalities have already been proved, and the assertion for $s=1$ follows immediately.
	\end{proof}
	
	The two terms in the minimum are independently sharp. For 
	\[
	L=(x_1,x_2)\cap(x_3,x_4)=(x_1x_3,x_1x_4,x_2x_3,x_2x_4),
	\]
	we have $m=2$ and $D=3$. Since $L$ is matroidal and generated in degree $2$, Theorem~\ref{thm:1} gives $\reg L^{(s)}=2s=sm$. On the other hand, if $L$ is the squarefree Veronese ideal of degree $2$ in $n\ge3$ variables, equivalently the basis ideal of the uniform matroid $U_{2,n}$, then $m=n$ and $D=2$, while Theorem~\ref{thm:1} gives $\reg L^{(s)}=2s=sD$. Thus either $m$ or $D$ can determine the bound in Corollary~\ref{cor:2}.
	
	\begin{corollary}\label{cor:3}
		Let $L\subsetneq S$ be a nonzero squarefree monomial ideal. Put $m=|\operatorname{Min}(S/L)|$ and $D=\lambda(L)-\operatorname{ht}L+1$. If $\omega(L)=\min\{m,D\}$, then every $L^{(s)}$ has a degree resolution. More precisely, for every $s\ge1$,
		\[
		\omega(L^{(s)})=\reg L^{(s)}=s\omega(L).
		\]
		Moreover, if $L=I_\Gamma$ for a simplicial complex $\Gamma$ on $[n]$ and $\dim(S/L)=\omega(L)-1$, equivalently $\dim\Gamma=\omega(L)-2$, then the same conclusion holds.
	\end{corollary}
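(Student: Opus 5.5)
The first assertion follows directly from Corollary~\ref{cor:2}. For every $s\ge1$ that corollary gives the chain
\[
s\omega(L)\le\omega(L^{(s)})\le\reg L^{(s)}\le s\min\{m,D\}.
\]
Under the hypothesis $\omega(L)=\min\{m,D\}$, the two outer terms are both $s\omega(L)$. Every inequality in the chain is therefore an equality, so $\omega(L^{(s)})=\reg L^{(s)}=s\omega(L)$, and $L^{(s)}$ has a degree resolution by definition.

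For the Stanley--Reisner statement, I will reduce to the first part by proving $\omega(L)=D$. Corollary~\ref{cor:2} already gives $\omega(L)\le\min\{m,D\}$, so $\omega(L)=D$ forces $\min\{m,D\}=D=\omega(L)$, which is exactly the hypothesis of the first part. Since $D=\dim(T/L)+1$, with $T$ the polynomial ring on $\operatorname{supp}(L)$, it suffices to show
\[
\dim(T/L)=\dim(S/L)-(n-\lambda(L)).
\]
The reason is that $S/L\cong (T/L)[x_i:i\notin\operatorname{supp}(L)]$, since $L$ is extended from $T$. Thus $\dim(T/L)\le\dim(S/L)=\omega(L)-1$, giving $D\le\omega(L)$. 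Combined with $\omega(L)\le D$ from Proposition~\ref{prop:2} at $s=1$, this yields $\omega(L)=D$.

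The only real care needed is the bookkeeping between $S$ and $T$ and the translation of the hypothesis. For the translation, $\dim(S/I_\Gamma)=\dim\Gamma+1$, so $\dim(S/L)=\omega(L)-1$ is equivalent to $\dim\Gamma=\omega(L)-2$. For the bookkeeping, any variable outside the support is a cone point of $\Gamma$, so it only raises $\dim(S/L)$ relative to $\dim(T/L)$. The inequality $D\le\omega(L)$ therefore goes in the right direction regardless of extra variables. If $\Gamma$ has cone points, the hypothesis forces $\dim(T/L)$ to be even smaller; but then $D<\omega(L)$, which contradicts Proposition~\ref{prop:2}. Hence in fact no such variables occur, and the argument is consistent in every case.
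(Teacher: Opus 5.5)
Your proposal is correct and matches the paper's proof: the first part is the squeeze of the chain in Corollary~\ref{cor:2} (the paper re-derives the lower bound $s\omega(L)\le\omega(L^{(s)})$ from the scaling property of Theorem~\ref{thm:3}, but that bound is already part of Corollary~\ref{cor:2}). The second part proves $\omega(L)=D$ exactly as the paper does, from Proposition~\ref{prop:2} and $\dim(T/L)\le\dim(S/L)$, and then obtains $\omega(L)=\min\{m,D\}$; your closing remark that the hypothesis forces every variable to occur in $L$ is true but not needed.
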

	
	\begin{proof}
		Set $e=\omega(L)=\min\{m,D\}$. The supermodular representation constructed in the proof of Corollary~\ref{cor:2}, together with Theorem~\ref{thm:3}, shows that $u^s\in \mathcal{G}(L^{(s)})$ for every $u\in \mathcal{G}(L)$. Choose $u\in \mathcal{G}(L)$ with $\deg u=e$. Then $se\le\omega(L^{(s)})$. On the other hand, Corollary~\ref{cor:2} gives $\omega(L^{(s)})\le\reg L^{(s)}\le s\min\{m,D\}=se$. Thus equality holds throughout. In particular,
		\begin{equation*}
			\omega(L^{(s)})=\reg L^{(s)}=se.
		\end{equation*}
		Hence every $L^{(s)}$ has a degree resolution.
		
		For the final assertion, put $e=\omega(L)$ and let $T$ be the polynomial ring on the variables occurring in $L$. Proposition~\ref{prop:2} with $s=1$ gives $e\le\dim(T/L)+1=D$. Deleting variables that do not occur in $L$ cannot increase the quotient dimension, so $D=\dim(T/L)+1\le\dim(S/L)+1=e$. Hence $e=D$. Corollary~\ref{cor:2} with $s=1$ now gives $e=\omega(L)\le\min\{m,D\}\le D=e$. Therefore $e=\min\{m,D\}$, and the first part applies.
	\end{proof}
	
	\begin{remark}\label{rem:3}
		When $L$ is unmixed, the regularity bound also follows directly from Theorem~\ref{thm:3}. Let $E$ be the support of $L$, let $h=\operatorname{ht}L$, and let $\cA$ be the lower ideal generated by the supports of the minimal primes. Every maximal member of $\cA$ has cardinality $h$. Define $\tau_h(F)=\max\{|F|-h+1,0\}$. This function is supermodular. On $\cA$, it is positive exactly on the supports of the minimal primes. Each of these supports has cardinality $h$, and $\tau_h$ takes the value $1$ on it. Thus $L^{(s)}=J_s(\cA,\tau_h)$, and Theorem~\ref{thm:3} gives
		\[
		\reg L^{(s)}\le s\tau_h(E)
		=s\bigl(\lambda(L)-h+1\bigr)
		=s\bigl(\dim(T/L)+1\bigr).
		\]
		For a general squarefree ideal the minimal primes need not have the same height, so this particular common supermodular representation need not exist. Corollary~\ref{cor:2} gives the same $sD$ bound without any unmixedness assumption by a direct degree complex argument.
	\end{remark}
	
	The final assertion of Corollary~\ref{cor:3} applies directly to skeleton and pure skeleton ideals. For a simplicial complex $\Delta$, let $\Delta^i=\{F\in\Delta:\dim F\le i\}$ and let $\Delta^{[i]}$ be the subcomplex generated by the $i$-dimensional faces.
	
	\begin{corollary}\label{cor:4}
		Let $\Delta$ be a simplicial complex on $[n]$ and let $-1\le i<\dim\Delta$. Then, for every $s\ge1$,
		\[
		\omega\bigl(I_{\Delta^i}^{(s)}\bigr)
		=\reg I_{\Delta^i}^{(s)}=s(i+2)
		\]
		and
		\[
		\omega\bigl(I_{\Delta^{[i]}}^{(s)}\bigr)
		=\reg I_{\Delta^{[i]}}^{(s)}=s(i+2).
		\]
		In particular, every symbolic power of either skeleton ideal has a degree resolution.
	\end{corollary}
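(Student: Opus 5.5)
We will deduce Corollary~\ref{cor:4} from the final assertion of Corollary~\ref{cor:3}. It therefore suffices to check two facts for each of $\Gamma=\Delta^i$ and $\Gamma=\Delta^{[i]}$: that $\dim\Gamma=i$, and that $\omega(I_\Gamma)=i+2$. Then $\dim\Gamma=\omega(I_\Gamma)-2$, and Corollary~\ref{cor:3} gives $\omega(I_\Gamma^{(s)})=\reg I_\Gamma^{(s)}=s\omega(I_\Gamma)=s(i+2)$ for all $s\ge1$.

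First, the dimension. Since $i<\dim\Delta$, the complex $\Delta$ has a face of dimension $>i$, and hence also a face of dimension exactly $i$. Both $\Delta^i$ and $\Delta^{[i]}$ contain that face, and neither contains a larger one, so $\dim\Delta^i=\dim\Delta^{[i]}=i$. The case $i=-1$ is included: then both complexes equal $\{\emptyset\}$ (nonvoid because $\Delta$ has a vertex), and $I_\Gamma=\m$ has $\omega=1=i+2$.

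Second, $\omega(I_\Gamma)$. The minimal generators of $I_\Gamma$ are the monomials $x_W$ with $W$ a minimal nonface of $\Gamma$. Every set of cardinality $\ge i+2$ is a nonface, since its dimension exceeds $i$. Hence a minimal nonface has cardinality at most $i+2$: if $|W|>i+2$, then any $(i+2)$-subset of $W$ is already a nonface, so $W$ is not minimal. This gives $\omega(I_\Gamma)\le i+2$.

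For the reverse inequality we need a minimal nonface of size exactly $i+2$. Take a face $G\in\Delta$ of dimension $i+1$, which exists because $i<\dim\Delta$. Then $|G|=i+2$, so $G\notin\Gamma$. Every proper subset of $G$ has cardinality at most $i+1$ and is a face of $\Delta$. Such a subset lies in $\Delta^i$, and it lies in $\Delta^{[i]}$ because it is contained in an $(i+1)$-subset of $G$, which is an $i$-face of $\Delta$. Hence $G$ is a minimal nonface of both complexes, $x_G\in\mathcal G(I_\Gamma)$, and $\omega(I_\Gamma)=i+2$.

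The only point needing care is this last step for the pure skeleton: we must verify that the proper subsets of $G$ lie in $\Delta^{[i]}$ rather than merely in $\Delta^i$, and the $(i+1)$-subsets of $G$ supply this. We must also make sure that Corollary~\ref{cor:3} is applied with $\dim(S/L)$ taken in the full ring $S$ on $[n]$, which is how the corollary is stated, so vertices of $[n]$ not occurring in $\Delta$ cause no issue. Note also that $I_\Gamma$ is proper and nonzero, since $\Gamma$ is nonvoid and $G$ is a nonface.
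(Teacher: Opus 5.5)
Your proposal is correct and follows essentially the same route as the paper: both show $\dim\Gamma=i$ and get $\omega(I_\Gamma)=i+2$ from a face of $\Delta$ of size $i+2$, which is a minimal nonface of both skeletons (using an $(i+1)$-subset for the pure skeleton), together with the fact that no minimal nonface has more than $i+2$ elements. Both then apply the final assertion of Corollary~\ref{cor:3}. Your explicit treatment of the case $i=-1$ and of why $I_\Gamma$ is nonzero and proper is a harmless extra check that the paper leaves implicit.
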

	
	\begin{proof}
		Let $\Gamma$ be either $\Delta^i$ or $\Delta^{[i]}$, and put $J=I_\Gamma$. We first determine $\omega(J)$.
		
		Because $i<\dim\Delta$, there is a face $B\in\Delta$ with $|B|=i+2$. The set $B$ is not a face of $\Gamma$, since $\Gamma$ has dimension $i$. Every proper subset $A\subsetneq B$ does belong to $\Gamma$. For the skeleton this is immediate. For the pure skeleton, choose an $(i+1)$-element set $C$ with $A\subseteq C\subsetneq B$. Then $C$ is an $i$-face of $\Delta$, so $A$ belongs to the subcomplex generated by the $i$-faces. Therefore $B$ is a minimal nonface and $\omega(J)\ge i+2$. Conversely, let $C$ be any minimal nonface of $\Gamma$. If $|C|\ge i+3$, then an $(i+2)$-element subset $B'\subsetneq C$ is not a face because $\dim\Gamma=i$. This contradicts the minimality of $C$. Therefore every minimal nonface has size at most $i+2$, and $\omega(J)=i+2$. Since $\dim\Gamma=i$, we have $\dim(S/J)=i+1=\omega(J)-1$. Corollary~\ref{cor:3} gives both equalities.
	\end{proof}
	
	\begin{remark}
		For $s=1$, the formula for the skeleton agrees with \cite[Corollary~3.7]{Namiq}. Corollary~\ref{cor:4} extends the same equality to every symbolic power, so both the skeleton and pure skeleton ideals have degree resolutions in all symbolic powers.
	\end{remark}
	
	\section{Polymatroidal ideals}\label{sec:3}
	
	We apply the construction above to polymatroidal ideals. Let $E=[n]$, and for $\mathbf u=(u_i)_{i\in E}\in\ZZ_{\ge0}^E$ write $|\mathbf u|=\sum_{i\in E}u_i$ and $u(F)=\sum_{i\in F}u_i$ for $F\subseteq E$. We use the componentwise partial order, where $\mathbf v\leq\mathbf u$ means $v_i\leq u_i$ for every $i\in E$.
	
	A \emph{discrete polymatroid} on $E$ is a nonempty finite set $P\subseteq\ZZ_{\ge0}^E$ satisfying the following two conditions \cite{HerzogHibi}.
	\begin{enumerate}[label=\textup{(\roman*)}]
		\item If $\mathbf u\in P$ and $\mathbf v\in\ZZ_{\ge0}^E$ satisfies $\mathbf v\leq\mathbf u$, then $\mathbf v\in P$.
		\item If $\mathbf u,\mathbf v\in P$ and $|\mathbf u|<|\mathbf v|$, then there exists $i\in E$ such that $u_i<v_i$ and $\mathbf u+\mathbf e_i\in P$.
	\end{enumerate}
	Condition \textup{(ii)} is the augmentation axiom. Together with downward closure, it implies that all maximal elements of $P$ have the same modulus. If $d=\max\{|\mathbf u|:\mathbf u\in P\}$, then the maximal elements of $P$, called the \emph{bases} of $P$, are precisely the vectors $\mathbf u\in P$ with $|\mathbf u|=d$.
	
	The bases satisfy the corresponding exchange property. If $\mathbf u$ and $\mathbf v$ are bases and $u_i>v_i$ for some $i\in E$, then there exists $j\in E$ with $u_j<v_j$ such that $\mathbf u-\mathbf e_i+\mathbf e_j$ is again a base of $P$. Writing $x^{\mathbf u}=\prod_{i\in E}x_i^{u_i}$, the basis ideal
	\[
	I(P)=\bigl(x^{\mathbf u}:\mathbf u\text{ is a base of }P\bigr)\subseteq S
	\]
	is polymatroidal in the sense used in the Introduction and is generated in degree $d$. Conversely, the exponent vectors of the minimal generators of any polymatroidal ideal form the bases of a discrete polymatroid \cite{HerzogHibi}.
	
	The rank function of $P$ is $\rho(F)=\max_{\mathbf u\in P}u(F)$ for $F\subseteq E$. It satisfies $\rho(\emptyset)=0$ and $\rho(E)=d$, is nondecreasing, and is submodular. In other words,
	\[
	\rho(F)\leq\rho(G)\qquad\text{whenever }F\subseteq G\subseteq E,
	\]
	and
	\[
	\rho(F)+\rho(G)\geq\rho(F\cap G)+\rho(F\cup G)
	\qquad\text{for all }F,G\subseteq E.
	\]
	
	Define the complementary rank function
	\begin{equation*}
		\tau(F)=d-\rho(E\setminus F),
		\qquad F\subseteq E.
	\end{equation*}
	Then $\tau(\emptyset)=0$ and $\tau(E)=d$. Moreover, $\tau$ is supermodular. Indeed, submodularity of $\rho$ applied to $E\setminus A$ and $E\setminus B$ gives
	\[
	\rho(E\setminus A)+\rho(E\setminus B)
	\ge
	\rho(E\setminus(A\cap B))+
	\rho(E\setminus(A\cup B)),
	\]
	which is equivalent to
	\[
	\tau(A)+\tau(B)
	\le\tau(A\cap B)+\tau(A\cup B).
	\]
	
	Herzog and Vladoiu \cite[Theorem~8 and the proof of Theorem~12]{HerzogVladoiu} prove the following intersection representation associated with the rank function.
	\begin{equation}\label{eq:5}
		I(P)=\bigcap_{F\subseteq E}P_F^{\tau(F)},
		\qquad P_F=(x_i:i\in F),
	\end{equation}
	with factors of exponent zero omitted. The corresponding canonical primary decomposition is
	\[
	I(P)=\bigcap_{\substack{T\subseteq E\\ P_T\in\Ass(S/I(P))}}
	P_T^{\tau(T)}.
	\]
	The Minkowski sum of $s$ copies of $P$ has rank function $s\rho$, and its bases generate $I(P)^s$. Applying the intersection representation of Herzog and Vladoiu \eqref{eq:5} to this polymatroid gives
	\begin{equation}\label{eq:6}
		I(P)^s=\bigcap_{F\subseteq E}P_F^{s\tau(F)}.
	\end{equation}
	
	For a monomial ideal $I$, define
	\begin{equation*}
		\cA(I)=
		\bigl\{F\subseteq E:F\subseteq T\text{ for some }P_T\in\Ass(S/I)\bigr\}.
	\end{equation*}
	Thus $\cA(I)$ is the lower ideal generated by the supports of all associated primes, including embedded ones.
	
	\begin{proposition}\label{prop:3}
		Let $I=I(P)$. Then, for every $s\ge1$,
		\[
		I^{(s)}=\bigcap_{F\in\cA(I)}P_F^{s\tau(F)}.
		\]
	\end{proposition}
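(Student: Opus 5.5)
Start from \eqref{eq:6}, which gives $I^s=\bigcap_{F\subseteq E}P_F^{s\tau(F)}$, and compute $I^sS_P\cap S$ for each $P=P_T\in\Ass(S/I)$. All associated primes of a monomial ideal are monomial primes, so every $P$ has the form $P_T$. Monomial localization commutes with finite intersections, so
\[
I^sS_{P_T}\cap S=\bigcap_{F\subseteq E}\bigl(P_F^{s\tau(F)}S_{P_T}\cap S\bigr).
\]
We will treat the factors according to how $F$ sits relative to $T$.

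\emph{Factors with $F\subseteq T$.} Here $P_F^{s\tau(F)}$ is $P_F$-primary and $P_F\subseteq P_T$, so the factor is unchanged: $P_F^{s\tau(F)}S_{P_T}\cap S=P_F^{s\tau(F)}$.

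\emph{Factors with $F\not\subseteq T$.} Pick $i\in F\setminus T$. Then $x_i$ is a unit in $S_{P_T}$, so the factor becomes the whole ring.

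This gives $I^sS_{P_T}\cap S=\bigcap_{F\subseteq T}P_F^{s\tau(F)}$. Intersecting over all $P_T\in\Ass(S/I)$ then yields
\[
I^{(s)}=\bigcap_{F\in\cA(I)}P_F^{s\tau(F)},
\]
because $F$ ranges over subsets of some associated support exactly when $F\in\cA(I)$.

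The key justification is the claim $J S_{P_T}\cap S=\bigcap_{F\subseteq T}Q_F$ for a finite intersection $J=\bigcap Q_F$ of $P_F$-primary monomial ideals. We will argue it directly with monomials. A monomial $x^{\mathbf a}$ lies in $JS_{P_T}\cap S$ if and only if $x^{\mathbf a}x^{\mathbf c}\in J$ for some monomial $x^{\mathbf c}$ supported outside $T$. For $Q_F=P_F^{k}$ with $F\subseteq T$, membership depends only on the exponents indexed by $F$, which are untouched by $x^{\mathbf c}$. For $F\not\subseteq T$, we can choose $x^{\mathbf c}$ to be a large power of a variable $x_i$ with $i\in F\setminus T$, so that $x^{\mathbf c}\in Q_F$ for every such $F$ at once. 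Since all ideals involved are monomial, checking monomials suffices, because localization at a monomial prime followed by contraction preserves monomiality.

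The main point we expect to need care is only this bookkeeping of the contraction. In particular, we must make sure that the definition \eqref{eq:1} uses the ideal $I^s$ itself, and not a primary decomposition of $I^s$, so that \eqref{eq:6} applies directly. No assumption that $\Ass(S/I^s)=\Ass(S/I)$ is needed, since the representation \eqref{eq:6} is taken over all subsets $F$.
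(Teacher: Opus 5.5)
Your proposal is correct and follows the same route as the paper. You localize the representation \eqref{eq:6} factor by factor at each $P_T\in\Ass(S/I)$, discard the factors with $F\not\subseteq T$ because they become the unit ideal, and keep the factors with $F\subseteq T$ because a $P_F$-primary ideal with $P_F\subseteq P_T$ contracts back to itself. Intersecting over all associated primes then produces exactly the lower ideal $\cA(I)$.

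There is one small slip in your supplementary monomial check. To make all factors with $F\not\subseteq T$ contain $x^{\mathbf a}x^{\mathbf c}$ simultaneously, take $x^{\mathbf c}=\prod_{i\notin T}x_i^{K}$ with $K$ large, not a power of a single variable. The factorwise argument already settles the claim, so that paragraph is not needed.
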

	
	\begin{proof}
		Fix an associated prime $P_T\in\Ass(S/I)$. By the definition of symbolic powers \eqref{eq:1}, we must compute $I^sS_{P_T}\cap S$. Using the intersection representation for $I^s$ in \eqref{eq:6} and localizing the finite intersection, we have
		\[
		I^sS_{P_T}
		=
		\bigcap_{F\subseteq E}P_F^{s\tau(F)}S_{P_T}.
		\]
		For a fixed $F\subseteq E$, suppose first that $F\nsubseteq T$. Choose $i\in F\setminus T$. Then $x_i\notin P_T$, so $x_i$ becomes a unit in $S_{P_T}$. Since $x_i\in P_F$, the localized ideal $P_FS_{P_T}$ is the whole ring. Thus $P_F^{s\tau(F)}$ localizes to the unit ideal and does not affect the intersection. If $F\subseteq T$, no variable generating $P_F$ is inverted. Since $P_F^{s\tau(F)}$ is $P_F$-primary and $P_F\subseteq P_T$, the multiplicative set $S\setminus P_T$ is disjoint from the radical $P_F$. Localization followed by contraction therefore recovers the same primary ideal: $P_F^{s\tau(F)}S_{P_T}\cap S=P_F^{s\tau(F)}$. Therefore
		\begin{equation*}
			I^sS_{P_T}\cap S
			=
			\bigcap_{F\subseteq T}P_F^{s\tau(F)}.
		\end{equation*}
		Intersecting the last display over all $P_T\in\Ass(S/I)$ gives the symbolic power. A factor indexed by $F$ occurs in the intersection if and only if $F\subseteq T$ for at least one associated prime $P_T$. By definition, this is equivalent to $F\in\cA(I)$. This proves Proposition~\ref{prop:3}. The same argument applies to minimal and embedded associated primes, so $\cA(I)$ must be generated by the supports of all associated primes.
	\end{proof}
	
	\begin{proof}[Proof of Theorem~\ref{thm:1}]
		Let $I=I(P)$. By Proposition~\ref{prop:3}, the symbolic powers have the common representation $I^{(s)}=J_s(\cA(I),\tau)$. The complementary rank function satisfies $\tau(E)=d$, while $J_1=I$ is generated in degree $d$. It follows that $\omega(J_1)=d=\tau(E)$. The equality case of Theorem~\ref{thm:3} gives
		\[
		\omega(I^{(s)})=\reg I^{(s)}=sd.
		\]
		Therefore every symbolic power $I^{(s)}$ has a degree resolution. Also, $I^s$ is polymatroidal and generated in degree $sd$ \cite{ConcaHerzog,HerzogHibi}. Since polymatroidal ideals have linear quotients, $I^s$ has an $sd$-linear resolution. Therefore $\reg I^s=sd=\reg I^{(s)}$, which completes the proof.
	\end{proof}
	
	\begin{corollary}\label{cor:5}
		Let $M$ be a matroid of rank $d$ on $[n]$, let $\mathcal B(M)$ be its set of bases, and let
		\[
		I_B(M)=(x_B:B\in\mathcal B(M))\subseteq S,
		\qquad x_B=\prod_{i\in B}x_i.
		\]
		Then every symbolic power $I_B(M)^{(s)}$ has a degree resolution and
		\[
		\omega(I_B(M)^{(s)})=\reg I_B(M)^{(s)}=sd
		\qquad(s\ge1).
		\]
	\end{corollary}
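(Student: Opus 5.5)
This is a direct specialization of Theorem~\ref{thm:1}. The plan is to show that $I_B(M)$ is a polymatroidal ideal generated in degree $d$ and then quote that theorem.

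First, I would identify $I_B(M)$ with the basis ideal of a discrete polymatroid. Let $P_M\subseteq\ZZ_{\ge0}^E$ be the set of indicator vectors $\mathbf e_A=\sum_{i\in A}\mathbf e_i$ of the independent sets $A$ of $M$.
\begin{enumerate}[label=\textup{(\roman*)}]
\item Downward closure holds. A vector $\mathbf v\le\mathbf e_A$ in $\ZZ_{\ge0}^E$ is a $0/1$ vector, namely the indicator of a subset of $A$, and subsets of independent sets are independent.
\item The augmentation axiom for $P_M$ is exactly the matroid augmentation axiom. If $|A|<|A'|$, there is $i\in A'\setminus A$ with $A\cup\{i\}$ independent, so $(\mathbf e_A)_i=0<1=(\mathbf e_{A'})_i$.
\end{enumerate}
The bases of $P_M$ are then the indicator vectors of the bases of $M$, all of modulus $d$. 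Hence $I(P_M)=I_B(M)$, and by the discussion preceding \eqref{eq:5} this ideal is polymatroidal and generated in degree $d$.

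Alternatively, one can check the exchange property on monomials directly. Let $x_B,x_{B'}$ be generators and let $i$ satisfy $(\mathbf e_B)_i>(\mathbf e_{B'})_i$, so $i\in B\setminus B'$. The basis exchange axiom gives $j\in B'\setminus B$ with $(B\setminus\{i\})\cup\{j\}$ a basis, which is precisely the exchange condition $x_jx_B/x_i\in\mathcal G(I_B(M))$.

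Second, I would handle degenerate cases. Since $M$ has a basis, $I_B(M)$ is nonzero. If $d\ge1$, then $I_B(M)$ is proper, and Theorem~\ref{thm:1} applies verbatim to give
\[
\omega(I_B(M)^{(s)})=\reg I_B(M)^{(s)}=sd\qquad(s\ge1),
\]
so every symbolic power has a degree resolution. If $d=0$, the only basis is $\emptyset$ and $I_B(M)=S$, so the statement is vacuous or trivial under the usual conventions. I would note that the corollary implicitly assumes $d\ge1$.

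No real obstacle is expected. The only point needing care is confirming that the polymatroid exchange property matches matroid basis exchange, including the strictness of the coordinate inequalities for $0/1$ vectors, and this is checked above.
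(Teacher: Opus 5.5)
Your proposal is correct and follows the paper's route. The paper gives no separate proof of Corollary~\ref{cor:5}; it treats it as an immediate specialization of Theorem~\ref{thm:1}, since the basis ideal of a matroid is polymatroidal and generated in degree $d$, which is exactly what your basis-exchange check establishes. Your remark that $d\ge1$ is implicitly required (because $d=0$ gives $I_B(M)=S$) is a fair point that the paper leaves unstated.
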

	
	Two further standard families give immediate consequences of Theorem~\ref{thm:1}.
	
	\begin{corollary}\label{cor:6}
		Let $\mathbf a=(a_1,\ldots,a_n)\in\ZZ_{\ge0}^n$ and let $d\ge1$ satisfy $a_1+\cdots+a_n\ge d$. Put
		\[
		I_{\mathbf a,d}=\bigl(x^{\mathbf u}:|\mathbf u|=d,\ 0\le u_i\le a_i\text{ for all }i\bigr)\subseteq S.
		\]
		Then every symbolic power $I_{\mathbf a,d}^{(s)}$ has a degree resolution and
		\[
		\omega(I_{\mathbf a,d}^{(s)})=\reg I_{\mathbf a,d}^{(s)}=\reg I_{\mathbf a,d}^s=sd
		\qquad(s\ge1).
		\]
	\end{corollary}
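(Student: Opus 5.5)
The plan is to show that $I_{\mathbf a,d}$ is a nonzero polymatroidal ideal generated in degree $d$, and then invoke Theorem~\ref{thm:1}, which immediately gives
\[
\omega(I_{\mathbf a,d}^{(s)})=\reg I_{\mathbf a,d}^{(s)}=\reg I_{\mathbf a,d}^s=sd
\qquad(s\ge1).
\]
The degree resolution statement is the equality $\omega=\reg$.

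First I check the hypotheses. The ideal is nonzero because $a_1+\cdots+a_n\ge d$. Starting from $\mathbf u=0$, one can raise coordinates one at a time without exceeding the caps $a_i$ until $|\mathbf u|=d$, so some admissible vector exists. Every such monomial has degree $d$. Since all generators have the same degree, none divides another. Hence the minimal generators are exactly the vectors $\mathbf u$ with $|\mathbf u|=d$ and $0\le u_i\le a_i$. Since $d\ge1$, the ideal is also proper.

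The main step is the exchange property. Let $\mathbf u,\mathbf v$ be two such vectors with $u_i>v_i$. Because $|\mathbf u|=|\mathbf v|=d$, there is some $j$ with $u_j<v_j$. Set $\mathbf w=\mathbf u-\mathbf e_i+\mathbf e_j$. Then $|\mathbf w|=d$, and $w_i=u_i-1\ge v_i\ge0$. Also $w_j=u_j+1\le v_j\le a_j$. All other coordinates are unchanged, so $x^{\mathbf w}\in\mathcal G(I_{\mathbf a,d})$.

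Alternatively, one can note that $\{\mathbf u\in\ZZ_{\ge0}^n:\mathbf u\le\mathbf a,\ |\mathbf u|\le d\}$ is a discrete polymatroid whose bases are these vectors. This is the standard example from \cite{HerzogHibi}.

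With the hypotheses verified, Theorem~\ref{thm:1} applies directly. No step is a genuine obstacle. The only point needing care is nonemptiness, which is exactly where the assumption $\sum a_i\ge d$ is used.
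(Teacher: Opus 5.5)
Your proposal is correct and matches the paper's proof: you verify the exchange property with $\mathbf u-\mathbf e_i+\mathbf e_j$, using $w_i=u_i-1\ge v_i\ge0$ and $u_j+1\le v_j\le a_j$, and then apply Theorem~\ref{thm:1}. Your explicit check that $a_1+\cdots+a_n\ge d$ makes the ideal nonzero is a small detail the paper leaves implicit.
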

	
	\begin{proof}
		The ideal $I_{\mathbf a,d}$ is polymatroidal. Indeed, if $x^{\mathbf u}$ and $x^{\mathbf v}$ are generators and $u_i>v_i$, then $|\mathbf u|=|\mathbf v|$ gives an index $j$ with $u_j<v_j$. Hence $\mathbf u-\mathbf e_i+\mathbf e_j$ still has nonnegative coordinates, total degree $d$, and $u_j+1\le v_j\le a_j$. Thus the exchange property holds, and the assertion follows from Theorem~\ref{thm:1}.
	\end{proof}
	
	\begin{corollary}\label{cor:7}
		Let $I_1,\ldots,I_r\subseteq S$ be nonzero polymatroidal ideals generated in degrees $d_1,\ldots,d_r$, respectively, and put $J=I_1\cdots I_r$. Then every symbolic power $J^{(s)}$ has a degree resolution and
		\[
		\omega(J^{(s)})=\reg J^{(s)}=\reg J^s=s(d_1+\cdots+d_r)
		\qquad(s\ge1).
		\]
	\end{corollary}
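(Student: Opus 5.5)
The plan is to reduce Corollary~\ref{cor:7} directly to Theorem~\ref{thm:1}. First, $J=I_1\cdots I_r$ is itself polymatroidal: products of polymatroidal ideals are polymatroidal \cite{ConcaHerzog,HerzogHibi}, as recalled in the Introduction. The key point to check is that $J$ is generated in the single degree $d_1+\cdots+d_r$. Each $I_k$ is generated in degree $d_k$, so every product of minimal generators has degree $d_1+\cdots+d_r$. These products generate $J$, so $\mathcal G(J)$ consists of monomials of exactly that degree. In particular $J$ is nonzero and proper, because each $d_k\ge1$.

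Next, I would apply Theorem~\ref{thm:1} to $J$ with $d=d_1+\cdots+d_r$. Here the symbolic powers are taken with respect to all of $\Ass(S/J)$, as in \eqref{eq:1}. This immediately gives
\[
\omega(J^{(s)})=\reg J^{(s)}=\reg J^s=s(d_1+\cdots+d_r)
\]
for every $s\ge1$, and hence every $J^{(s)}$ has a degree resolution.

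Nothing new has to be proved. The only step needing care is to invoke the product-closure result in the form that the product is again polymatroidal in one degree. That fact is exactly the cited statement, so there is no real obstacle. If one preferred to avoid it, one could instead note that $J$ is the basis ideal of the Minkowski sum of the discrete polymatroids of the $I_k$. Its rank function is $\rho_1+\cdots+\rho_r$, which is submodular and satisfies $\rho(E)=\sum d_k$. Then Proposition~\ref{prop:3} and Theorem~\ref{thm:3} apply verbatim.
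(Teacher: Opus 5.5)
Your proof is correct and is essentially the paper's argument: the paper also cites \cite[Theorem~5.3]{ConcaHerzog} for the polymatroidality of $J$, notes that $J$ is generated in degree $d_1+\cdots+d_r$, and then applies Theorem~\ref{thm:1}. Your alternative Minkowski-sum route does not really avoid the cited fact. Identifying that sum as a discrete polymatroid with rank function $\rho_1+\cdots+\rho_r$ is the polymatroid sum theorem, which is a result of the same depth as the product-closure statement.
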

	
	\begin{proof}
		Products of polymatroidal ideals are polymatroidal
		\cite[Theorem~5.3]{ConcaHerzog}. Moreover, $J$ is generated in degree $d_1+\cdots+d_r$. The assertion follows from Theorem~\ref{thm:1}.
	\end{proof}
	
	\begin{remark}
		The preceding corollaries illustrate both the scope and the limitations of Theorem~\ref{thm:1}.
		\begin{enumerate}[label=\textup{(\roman*)}]
			\item Minh and Trung computed the regularity of symbolic powers of Stanley--Reisner ideals of matroid complexes \cite{MinhTrung}, while Mantero and Nguyen studied symbolic powers of Stanley--Reisner and cover ideals of matroids \cite{ManteroNguyen}. Corollary~\ref{cor:5}, by contrast, concerns the basis ideal of a matroid.
			
			\item Corollary~\ref{cor:6} includes the ordinary Veronese ideals and the squarefree Veronese ideals. More generally, Theorem~\ref{thm:1} applies to other standard classes of polymatroidal ideals, including principal Borel ideals and transversal polymatroidal ideals; see \cite{FicarraMoradi,HerzogHibi}. Corollary~\ref{cor:7} also shows that the conclusion is preserved under finite products of polymatroidal ideals.
			
			\item Theorem~\ref{thm:1} does not extend to arbitrary weakly polymatroidal ideals. Indeed, there are weakly polymatroidal ideals generated in one degree whose ordinary powers do not have linear resolutions \cite[Remark~1.5]{MohammadiMoradi}, so the equality $\reg I^s=sd$ may fail. Nevertheless, the squarefree results of Section~\ref{sec:2} apply to every squarefree weakly polymatroidal ideal. In particular, if $L$ is squarefree and weakly polymatroidal, then Corollary~\ref{cor:2} gives
			\[
			s\omega(L)\le\omega(L^{(s)})\le\reg L^{(s)}
			\le s\min\{m,D\},
			\]
			and if $\omega(L)=\min\{m,D\}$, then Corollary~\ref{cor:3} shows that every $L^{(s)}$ has a degree resolution.
		\end{enumerate}
	\end{remark}
	
	We conclude the section by turning to componentwise linearity. Theorem~\ref{thm:1} shows that every symbolic power of a polymatroidal ideal has a degree resolution, but this does not imply componentwise linearity. For a homogeneous ideal $J$ and an integer $q$, let $J_{\langle q\rangle}$ denote the ideal generated by the homogeneous elements of $J$ of degree $q$. The counterexamples below are basis ideals of simple graphic matroids.
	
	\begin{figure}[ht]
		\centering
		\begin{tikzpicture}[
			scale=.82,
			vertex/.style={circle,draw=black,fill=black,minimum size=5.5pt,inner sep=0pt},
			edge/.style={line width=0.7pt},
			lab/.style={fill=white,inner sep=1.1pt,font=\scriptsize}
			]
			\node[vertex] (v1) at (0,1.35) {};
			\node[vertex] (v2) at (-1.35,-.85) {};
			\node[vertex] (v3) at (1.35,-.85) {};
			\node[vertex] (v4) at (0,-.15) {};
			
			\draw[edge] (v1)-- node[lab,pos=.52,left] {$1$} (v2);
			\draw[edge] (v1)-- node[lab,pos=.52,right] {$2$} (v3);
			\draw[edge] (v1)-- node[lab,pos=.52,left] {$3$} (v4);
			\draw[edge] (v2)-- node[lab,pos=.50,below] {$4$} (v3);
			\draw[edge] (v2)-- node[lab,pos=.54,above] {$5$} (v4);
			\draw[edge] (v3)-- node[lab,pos=.54,above] {$6$} (v4);
		\end{tikzpicture}
		\caption{The graph $K_4$ used in the rank $3$ construction}
		\label{fig:1}
	\end{figure}
	
	\begin{proposition}\label{prop:4}
		For every integer $d\ge3$, there exists a simple graphic matroid $M_d$ of rank $d$ such that $I_B(M_d)^{(2)}$ is not componentwise linear. More precisely,
		\[
		\reg\left(\left(I_B(M_d)^{(2)}\right)_{\langle2d-2\rangle}\right)=2d-1.
		\]
	\end{proposition}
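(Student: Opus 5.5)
The construction is the one announced in the Introduction. Let $M_3=M(K_4)$ be the cycle matroid of $K_4$ with edges labelled as in Figure~\ref{fig:1}. For $d\ge4$, let $M_d$ be $M_3$ with $d-3$ coloops $7,\ldots,d+3$ adjoined; graphically this means attaching $d-3$ pendant edges, so $M_d$ is still simple and graphic and has rank $d$. Its bases are $B\cup\{7,\ldots,d+3\}$ with $B$ a spanning tree of $K_4$. Hence $I_B(M_d)=y\,I_B(M_3)$ with $y=x_7\cdots x_{d+3}$.

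\textbf{Reduction to $d=3$.} The coloops form the lower set $\{i\}$ with $\tau(\{i\})=1$, and the associated primes of $I_B(M_d)$ are $(x_i)$ for each coloop $i$ together with the extensions of those of $I_B(M_3)$. So Proposition~\ref{prop:3} gives
\[
I_B(M_d)^{(2)}=y^2\,I_B(M_3)^{(2)}S.
\]
Indeed, $\tau$ splits additively over the direct sum, and a set $F\in\cA$ can be taken inside one summand because $P_F^{a}\cap P_G^{b}\supseteq$ the relevant factors. I would check this carefully, or more simply observe that intersecting with $(x_i)^2$ and extending commutes with multiplication by a variable not in the other primes. Multiplying by the monomial $y^2$ of degree $2(d-3)$ shifts all degrees. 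It follows that
\[
\bigl(I_B(M_d)^{(2)}\bigr)_{\langle 2d-2\rangle}=y^2\bigl(I_B(M_3)^{(2)}\bigr)_{\langle 4\rangle},
\]
and its regularity equals $2(d-3)+\reg\bigl(I_B(M_3)^{(2)}\bigr)_{\langle4\rangle}$. So it suffices to show that $\reg\bigl(I_B(M_3)^{(2)}\bigr)_{\langle4\rangle}=5$. This in particular exceeds $4$, so the component is not linear and the ideal is not componentwise linear.

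\textbf{The case $K_4$.} Here $I=I_B(M(K_4))$ is generated by the $16$ squarefree cubics $x_B$ for spanning trees $B$. Since it is a squarefree matroid ideal, it is unmixed of height $n-d+1=4$ (minimal primes $P_C$ for cocircuits $C$). By Proposition~\ref{prop:3}, or simply radicality, $I^{(2)}=\bigcap_C P_C^2$. The cocircuits of $K_4$ are the four vertex stars (triangles in the edge labelling, e.g. $\{1,2,3\}$) and the three $4$-element cuts separating two pairs of vertices (e.g. $\{1,2,5,6\}$).

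A degree-$4$ element of $I^{(2)}$ is a monomial meeting each cocircuit with multiplicity $\ge2$. Theorem~\ref{thm:3} says $\omega(I^{(2)})=6$, and the generators in degree $4$ are limited. One natural family is $x_ex_{e'}x_fx_{f'}$ with $\{e,e'\},\{f,f'\}$ pairs of opposite edges. Each triangle star contains exactly one edge from each opposite pair, and each $4$-cut contains two full opposite pairs. So a product over two of the three opposite pairs works: $(x_1x_6)(x_2x_5)$ with the $4$-cut check. I would enumerate all degree-$4$ monomials in $I^{(2)}$; by symmetry these should be the three products of two opposite pairs, possibly together with squares like $x_e^2x_f x_g$, which I would check. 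Then $J=(I^{(2)})_{\langle4\rangle}$ is generated by these together with all degree-$4$ multiples of the degree-$\le3$ part. The degree-$3$ part is empty, since any cubic misses some star twice, so $J$ is exactly the monomial ideal generated by the degree-$4$ monomials.

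\textbf{Computing $\reg J=5$.} $J$ is a small explicit monomial ideal, for instance $J=(x_1x_6x_2x_5,\ x_1x_6x_3x_4,\ x_2x_5x_3x_4,\ \ldots)$. I would compute its regularity via its lcm-lattice or Taylor complex, or via Takayama's formula \eqref{eq:4}, and exhibit a nonlinear syzygy. For three generators $ab,ac,bc$ with $a=x_1x_6$, $b=x_2x_5$, $c=x_3x_4$, the ideal is the edge ideal of a triangle with "variables" of degree $2$. Its resolution has second syzygy in degree $3$ in $a,b,c$, i.e. degree $6$ in $x$. This gives $\beta_{2,6}$ and hence regularity $6-2=4$, which is linear, so extra square generators must be what creates the defect. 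The main obstacle is precisely this step: pinning down the full list of degree-$4$ generators and locating a nonvanishing Betti number $\beta_{i,i+5}(J)$. I would find it by computing the lcm-lattice homology, or via Takayama on $S/J$ at a multidegree supported on a cocircuit, aiming at a class in $\widetilde H$ giving $\reg J\ge5$. Theorem~\ref{thm:2}-style bounds applied with the exponents from Proposition~\ref{prop:3} give the upper bound $5$. If hand computation proves unwieldy, I would record a direct verification of the Betti table by computer algebra.
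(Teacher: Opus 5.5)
Your reduction from $M_d$ to $M(K_4)$ is the same as the paper's and is fine: you adjoin coloops, use $I_B(M_d)^{(2)}=Y^2I_3^{(2)}$, and shift degrees by $2(d-3)$. The gap is in the $K_4$ step, which carries all the content. You do not finish the enumeration of degree-$4$ monomials, and you do not compute the regularity. The one computation you do make is misindexed, and that sends you in the wrong direction.

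For $J=(BC,AC,AB)$ with $A=x_1x_6$, $B=x_2x_5$, $C=x_3x_4$ pairwise coprime, the first syzygies are generated by $(A,-B,0)$ and $(A,0,-C)$. So the minimal resolution is $0\to S(-6)^2\to S(-4)^3\to J\to 0$. The degree-$6$ term is $\beta_{1,6}(J)=\beta_{2,6}(S/J)$. Your value $6-2=4$ is $\reg(S/J)$, so $\reg J=5$, not $4$. A quick sanity check: the lcm of any two of $AB,AC,BC$ is $ABC$, of degree $6$. So $J$ has no degree-$5$ (linear) syzygies at all and cannot have a $4$-linear resolution. The triangle edge ideal is linear in honest variables, but replacing $a,b,c$ by pairwise coprime quadrics doubles every degree: generators go to degree $4$ and syzygies to degree $6$, a jump of $2$. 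These three monomials already create the defect, so your conclusion that ``extra square generators must be what creates the defect'' is false.

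The missing idea for the enumeration is to average over the four vertex stars. Each edge of $K_4$ lies in exactly two stars, so summing the inequalities $a(C)\ge 2$ over the stars gives $2|\mathbf a|\ge 8$. Hence $I_3^{(2)}$ has nothing below degree $4$, and in degree $4$ all four star inequalities must be equalities. Solving them forces $a_1=a_6=r$, $a_2=a_5=t$ and $a_3=a_4=2-r-t$. The squares $x_1^2x_6^2$, $x_2^2x_5^2$, $x_3^2x_4^2$ each violate one $4$-element cut, and what remains is exactly $\{AB,AC,BC\}$. In particular no monomial of the form $x_e^2x_fx_g$ lies in $I_3^{(2)}$, so the fallback search you propose would come up empty.

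Two smaller inaccuracies. First, $I_B(M(K_4))$ is not unmixed of height $4$: its minimal primes have heights $3$ and $4$, as your own list of cocircuits shows. Second, the supermodular bounds of Section~\ref{sec:2} give $\reg I_3^{(2)}\le 6$, not a bound on the regularity of its degree-$4$ component, so they do not supply the upper bound $5$. The explicit resolution gives the exact value directly.
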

	
	\begin{proof}
		We first consider rank $3$. Let $M_3=M(K_4)$ and label the six edges of $K_4$, as in Figure~\ref{fig:1}, by $1=12$, $2=13$, $3=14$, $4=23$, $5=24$, and $6=34$. Put $I_3=I_B(M_3)\subseteq S_3=\kk[x_1,\ldots,x_6]$. In any matroid, a cocircuit is a minimal set that meets every basis. Therefore the cocircuits are the minimal vertex covers of the basis hypergraph. For a cycle matroid, the cocircuits are the bonds of the graph. In $K_4$ there are four bonds with $3$ edges corresponding to cuts that isolate one vertex and three bonds with $4$ edges corresponding to the three partitions of the vertex set into two pairs. The cocircuits are therefore
		\begin{align*}
			&\{1,2,3\},\ \{1,4,5\},\ \{2,4,6\},\ \{3,5,6\},\\
			&\{1,2,5,6\},\ \{1,3,4,6\},\ \{2,3,4,5\}.
		\end{align*}
		Since $I_3$ is squarefree, it is radical, and its minimal primes are the monomial primes corresponding to these seven minimal vertex covers. Thus $I_3^{(2)}$ is the intersection of their squares.
		
		We determine the degree $4$ component. Let $x^{\mathbf a}\in I_3^{(2)}$ have total degree $4$. The four $3$-element cocircuits give
		\begin{align*}
			a_1+a_2+a_3&\ge2,&
			a_1+a_4+a_5&\ge2,\\
			a_2+a_4+a_6&\ge2,&
			a_3+a_5+a_6&\ge2.
		\end{align*}
		Each variable occurs in exactly two of these sums. Adding the four inequalities therefore gives $2|\mathbf a|\ge8$. But $|\mathbf a|=4$, so equality holds. Because each of the four left sides is an integer at least $2$, their sum can equal $8$ only if all four inequalities are equalities.
		
		Subtracting the first two equations and, separately, the third and fourth equations gives
		\[
		a_2+a_3=a_4+a_5,
		\qquad
		a_2+a_4=a_3+a_5.
		\]
		Subtracting the two relations gives $a_3=a_4$, and then $a_2=a_5$. Comparing the first and fourth equations gives $a_1=a_6$. We obtain
		\[
		a_1=a_6=r,
		\qquad
		a_2=a_5=t,
		\qquad
		a_3=a_4=2-r-t,
		\]
		where $r,t\in\ZZ_{\ge0}$ and $r+t\le2$. The six possibilities are
		\[
		(0,0),\ (1,0),\ (0,1),\ (2,0),\ (0,2),\ (1,1).
		\]
		The choices $(2,0)$, $(0,2)$, and $(0,0)$ give, respectively,
		\[
		x_1^2x_6^2,\qquad x_2^2x_5^2,\qquad x_3^2x_4^2.
		\]
		They fail the inequalities associated with the cocircuits with $4$ edges $\{2,3,4,5\}$, $\{1,3,4,6\}$, and $\{1,2,5,6\}$, respectively. The remaining choices $(1,0)$, $(0,1)$, and $(1,1)$ give, respectively, $ x_1x_3x_4x_6$, $x_2x_3x_4x_5$, and $x_1x_2x_5x_6$. Each contains at least two variables from every cocircuit with $4$ edges, so all three satisfy the remaining inequalities. Consequently
		\begin{equation*}
			\left(I_3^{(2)}\right)_{\langle4\rangle}
			=(m_1,m_2,m_3),
		\end{equation*}
		where
		\[
		m_1=x_2x_3x_4x_5,
		\qquad
		m_2=x_1x_3x_4x_6,
		\qquad
		m_3=x_1x_2x_5x_6.
		\]
		
		To compute the regularity of this component, set $A=x_1x_6$, $B=x_2x_5$, and $C=x_3x_4$. Then $m_1=BC$, $m_2=AC$, and $m_3=AB$, and $A,B,C$ are pairwise coprime. Two syzygies are
		\[
		(A,-B,0)
		\qquad\text{and}\qquad
		(A,0,-C).
		\]
		They have total degree $6$. We verify that they generate all first syzygies. Suppose $fBC+gAC+hAB=0$. Reducing modulo $(A)$ gives $fBC\in(A)$. Since $\gcd(A,BC)=1$, $A\mid f$. Write $f=Aq$. After dividing the relation by $A$, we have $qBC+gC+hB=0$. Reducing modulo $(B)$ gives $gC\in(B)$, and $\gcd(B,C)=1$, so $B\mid g$. Write $g=-Bp$. Then $B\bigl(C(q-p)+h\bigr)=0$, so $h=C(p-q)$. Therefore $(f,g,h)=p(A,-B,0)+(q-p)(A,0,-C)$. These two syzygies are $S_3$-linearly independent and generate the first syzygy module. Since every entry of the differential has positive degree, this resolution is minimal. Therefore this ideal has the minimal free resolution
		\[
		0\longrightarrow S_3(-6)^2
		\longrightarrow S_3(-4)^3
		\longrightarrow
		\left(I_3^{(2)}\right)_{\langle4\rangle}
		\longrightarrow0.
		\]
		Hence
		\begin{equation*}
			\reg\left(\left(I_3^{(2)}\right)_{\langle4\rangle}\right)
			=\max\{4,6-1\}=5.
		\end{equation*}
		The ideal is generated in degree $4$ but does not have a $4$-linear resolution. Therefore $I_3^{(2)}$ is not componentwise linear.
		
		For higher ranks, let $d\ge3$, set $k=d-3$, and let $M_d=M_3\oplus U_{k,k}$. All $k$ elements $y_1,\ldots,y_k$ of $U_{k,k}$ are coloops. The matroid $M_d$ is the cycle matroid of the graph obtained by attaching a path of $k$ bridge edges to $K_4$. Thus $M_d$ is again a simple graphic matroid, and its rank is $d$. Put $S_d=S_3[y_1,\ldots,y_k]$ and $Y=y_1\cdots y_k$, with $Y=1$ when $k=0$. Every basis contains all coloops, so $I_B(M_d)=YI_3$. The minimal vertex covers of the new basis hypergraph are the seven cocircuits inherited from $M_3$ together with the singletons $\{y_1\},\ldots,\{y_k\}$. After extending $I_3^{(2)}$ to $S_d$, we get
		\[
		I_B(M_d)^{(2)}
		=I_3^{(2)}\cap(y_1)^2\cap\cdots\cap(y_k)^2.
		\]
		The ideal $I_3^{(2)}$ involves only the variables $x_1,\ldots,x_6$, whereas each $(y_j)^2$ involves a new variable. The intersection therefore equals the product
		\[
		I_B(M_d)^{(2)}=Y^2I_3^{(2)}.
		\]
		The four $3$-element cocircuit inequalities show that $I_3^{(2)}$ has no monomials of degree below $4$. Since $2d-2=4+2k$,
		\[
		\left(I_B(M_d)^{(2)}\right)_{\langle2d-2\rangle}
		=Y^2\left(I_3^{(2)}\right)_{\langle4\rangle}.
		\]
		Multiplication by $Y^2$ shifts every degree in the minimal resolution by $2k$. Using the regularity computation above,
		\[
		\reg\left(\left(I_B(M_d)^{(2)}\right)_{\langle2d-2\rangle}\right)
		=5+2k=2d-1.
		\]
		This component is generated in degree $2d-2$, while its regularity is $2d-1$. Thus it does not have a linear resolution. It follows that $I_B(M_d)^{(2)}$ is not componentwise linear.
	\end{proof}
	
	\begin{corollary}\label{cor:8}
		Let $d\ge1$. The following conditions are equivalent.
		\begin{enumerate}[label=\textup{(\roman*)}]
			\item $d\le2$.
			\item every symbolic power of every polymatroidal ideal generated in degree $d$ is componentwise linear.
		\end{enumerate}
	\end{corollary}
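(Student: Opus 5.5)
The equivalence splits into two implications, and both reduce to results already available.

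For (ii)$\Rightarrow$(i), I argue by contraposition. Suppose $d\ge3$. Proposition~\ref{prop:4} gives a simple graphic matroid $M_d$ of rank $d$ for which $I_B(M_d)^{(2)}$ is not componentwise linear. The basis ideal of a matroid is the basis ideal of a discrete polymatroid, so it is polymatroidal. It is generated in degree $d$ because every basis has exactly $d$ elements. So $I_B(M_d)$ is a polymatroidal ideal generated in degree $d$ with a symbolic power that is not componentwise linear, and (ii) fails.

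For (i)$\Rightarrow$(ii), I split into the cases $d=1$ and $d=2$.

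When $d=2$, I cite Ficarra and Moradi \cite[Theorem~5.8(a) and Proposition~6.1]{FicarraMoradi}. As recalled in the Introduction, they prove Conjecture~B for polymatroidal ideals generated in degree $2$. The one point to check is that their convention for symbolic powers agrees with \eqref{eq:1}, which uses all associated primes. If their definition uses only minimal primes, I will check that their argument covers the embedded primes, or else cite the version of their result that applies to \eqref{eq:1}.

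When $d=1$, I give a direct argument. A polymatroidal ideal generated in degree $1$ is generated by variables, so $I=P_F$ for some nonempty $F\subseteq E$. This is prime, so $\Ass(S/I)=\{P_F\}$ and $I^{(s)}=P_F^s$. The ideal $P_F^s$ is polymatroidal, being a product of polymatroidal ideals by \cite{ConcaHerzog}, so it has linear quotients and an $s$-linear resolution. An ideal with a linear resolution is componentwise linear. Alternatively, the $d=1$ case can be read off from Theorem~\ref{thm:1}: $I^{(s)}$ is generated in the single degree $s$ and has regularity $s$, so its resolution is linear.

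I expect the main obstacle to be the $d=2$ case. That direction depends entirely on the external theorem, and the work is confirming that its hypotheses and its symbolic-power convention match those used here. Everything else is a direct appeal to Proposition~\ref{prop:4} and to standard facts about polymatroidal ideals.
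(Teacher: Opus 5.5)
Your proposal is correct and follows the paper's proof: contraposition via Proposition~\ref{prop:4} for (ii)$\Rightarrow$(i), the monomial-prime argument for $d=1$, and the results of Ficarra and Moradi for $d=2$. The paper splits the $d=2$ case into the squarefree case \cite[Proposition~6.1]{FicarraMoradi} and the non-squarefree case \cite[Theorem~5.8(a)]{FicarraMoradi}, where $I^{(s)}=I^s$, and this split also settles your convention worry: a squarefree ideal has no embedded primes, and $I^s\subseteq I^{(s)}\subseteq\bigcap_{P\in\operatorname{Min}(S/I)}(I^sS_P\cap S)$ shows that $I^{(s)}=I^s$ under the minimal-prime convention forces the same equality under \eqref{eq:1}.
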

	
	\begin{proof}
		If $d=1$, every polymatroidal ideal generated in degree $1$ is generated by a subset of the variables, hence is a monomial prime. Its symbolic and ordinary powers agree and have linear resolutions. If $d=2$, let $I$ be a polymatroidal ideal generated in degree $2$. If $I$ is squarefree, assertion (ii) follows from \cite[Proposition~6.1]{FicarraMoradi}. Otherwise, \cite[Theorem~5.8(a)]{FicarraMoradi} gives $I^{(s)}=I^s$ for all $s\ge1$, so these powers have linear resolutions. This proves (i)$\Rightarrow$(ii).
		
		Conversely, if $d\ge3$, Proposition~\ref{prop:4} gives a matroidal ideal generated in degree $d$, hence a polymatroidal ideal, whose second symbolic power is not componentwise linear. Then (ii) fails, which proves (ii)$\Rightarrow$(i).
	\end{proof}
	
	\begin{remark}
		Proposition~\ref{prop:4} disproves Conjecture~B of Ficarra and Moradi \cite{FicarraMoradi} in every generating degree $d\ge3$, even for basis ideals of simple graphic matroids. Since the degree $2$ case is positive, rank $3$ is the smallest possible rank for a matroidal counterexample. The same $K_4$ example gives a negative answer to the componentwise linearity part of Question~D in \cite{FicarraMoradi}.
	\end{remark}


\begin{thebibliography}{99}
		\small
		
		\bibitem{AhmedNamiqCoChordal}
		C.~Ahmed and M.~R. Namiq,
		\emph{Regularity of symbolic powers of co-chordal edge ideals},
		arXiv:2608.20542 (2026).
		\url{https://arxiv.org/abs/2608.20542}.
		
		\bibitem{Bjorner}
		A.~Bj\"orner,
		\emph{Topological methods},
		in R.~L. Graham, M.~Gr\"otschel, and L.~Lov\'asz (eds.),
		\emph{Handbook of Combinatorics}, Vols.~1--2,
		Elsevier, Amsterdam, 1995, pp.~1819--1872.
		
		\bibitem{CHHKTT}
		G.~Caviglia, H.~T. H\`a, J.~Herzog, M.~Kummini, N.~Terai, and N.~V. Trung,
		\emph{Depth and regularity modulo a principal ideal},
		J. Algebraic Combin. \textbf{49} (2019), no.~1, 1--20.
		\doi{10.1007/s10801-018-0811-9}.
		
		\bibitem{ChoiKimPark}
		I.~Choi, J.~Kim, and B.~Park,
		\emph{Collapsibility of non-cover complexes of graphs},
		Electron. J. Combin. \textbf{27} (2020), no.~1, Paper~P1.20.
		\doi{10.37236/8684}.
		
		\bibitem{ConcaHerzog}
		A.~Conca and J.~Herzog,
		\emph{Castelnuovo--Mumford regularity of products of ideals},
		Collect. Math. \textbf{54} (2003), no.~2, 137--152.
		
		\bibitem{DungHienNguyenTrung}
		L.~X. Dung, T.~T. Hien, H.~D. Nguyen, and T.~N. Trung,
		\emph{Regularity and Koszul property of symbolic powers of monomial ideals},
		Math. Z. \textbf{298} (2021), no.~3--4, 1487--1522.
		\doi{10.1007/s00209-020-02657-8}.
		
		\bibitem{FicarraMoradi}
		A.~Ficarra and S.~Moradi,
		\emph{Symbolic powers of polymatroidal ideals},
		J. Pure Appl. Algebra \textbf{229} (2025), no.~10, Article~108082.
		\doi{10.1016/j.jpaa.2025.108082}.
		
		\bibitem{FicarraMoradiRomer}
		A.~Ficarra, S.~Moradi, and T.~R\"omer,
		\emph{Componentwise linear symbolic powers of edge ideals and Minh's conjecture},
		J. Algebraic Combin. \textbf{62} (2025), no.~4, Article~56.
		\doi{10.1007/s10801-025-01485-5}.
		
		\bibitem{HaNgo}
		H.~T. H\`a and N.~V. Trung,
		\emph{Membership criteria and containments of powers of monomial ideals},
		Acta Math. Vietnam. \textbf{44} (2019), no.~1, 117--139.
		\doi{10.1007/s40306-018-00325-y}.
		
		
		\bibitem{HerzogHibi}
		J.~Herzog and T.~Hibi,
		\emph{Discrete polymatroids},
		J. Algebraic Combin. \textbf{16} (2002), 239--268.
		\doi{10.1023/A:1021852421716}.
		
		\bibitem{HerzogVladoiu}
		J.~Herzog and M.~Vladoiu,
		\emph{Monomial ideals with primary components given by powers of monomial prime ideals},
		Electron. J. Combin. \textbf{21} (2014), no.~1, Paper~P1.69.
		\doi{10.37236/3813}.
		
		\bibitem{KimKim}
		J.~Kim and M.~Kim,
		\emph{Domination numbers and noncover complexes of hypergraphs},
		J. Combin. Theory Ser. A \textbf{180} (2021), Article~105408.
		\doi{10.1016/j.jcta.2021.105408}.
		
		\bibitem{ManteroNguyen}
		P.~Mantero and V.~Nguyen,
		\emph{The structure of symbolic powers of matroids},
		Trans. Amer. Math. Soc. \textbf{379} (2026), no.~6, 3855--3894.
		\doi{10.1090/tran/9421}.
		
		\bibitem{MinhTrung}
		N.~C. Minh and T.~N. Trung,
		\emph{Regularity of symbolic powers and arboricity of matroids},
		Forum Math. \textbf{31} (2019), no.~2, 465--477.
		\doi{10.1515/forum-2017-0243}.
		
		\bibitem{MohammadiMoradi}
		F.~Mohammadi and S.~Moradi,
		\emph{Weakly polymatroidal ideals with applications to vertex cover ideals},
		Osaka J. Math. \textbf{47} (2010), no.~3, 627--636.
		\doi{10.18910/12673}.
		
		\bibitem{Namiq}
		M.~R. Namiq,
		\emph{The graded Betti numbers of the skeletons of simplicial complexes},
		Indag. Math. (N.S.) (2026), in press.
		\doi{10.1016/j.indag.2026.03.006}.
		
		\bibitem{Takayama}
		Y.~Takayama,
		\emph{Combinatorial characterizations of generalized Cohen--Macaulay monomial ideals},
		Bull. Math. Soc. Sci. Math. Roumanie (N.S.) \textbf{48(96)} (2005), no.~3, 327--344.
		
	\end{thebibliography}
\end{document}